\pdfoutput=1
\documentclass[a4paper,11pt]{amsart}

\usepackage{hyperref}

\usepackage{mathptmx}
\usepackage{amsmath}
\usepackage{amssymb}
\usepackage{amsthm}
\usepackage{array}
\usepackage[all,tips,line]{xy}
\usepackage{enumerate}
\usepackage{graphicx}
\usepackage{lmodern}
\usepackage{mathrsfs}
\usepackage[utf8]{inputenc}
\usepackage{multirow}

\usepackage{tikz-cd}

\usepackage{subcaption}
\captionsetup[subfigure]{justification=justified,singlelinecheck=false}

\hypersetup{colorlinks,citecolor=blue,linkcolor=blue}

\newtheorem{theorem}{Theorem}[section]
\newtheorem{lemma}[theorem]{Lemma}
\newtheorem{prop}[theorem]{Proposition}
\newtheorem{cor}[theorem]{Corollary}

\theoremstyle{definition}
\newtheorem{definition}[theorem]{Definition}
\newtheorem{question}[theorem]{Question}
\newtheorem{example}[theorem]{Example}
\newtheorem{rmk}[theorem]{Remark}

\numberwithin{equation}{section}

\DeclareMathOperator{\GL}{GL}

\DeclareMathOperator{\PO}{PO}

\DeclareMathOperator{\PU}{PU}

\DeclareMathOperator{\PGL}{PGL}

\DeclareMathOperator{\conj}{conj}
\DeclareMathOperator{\Stab}{Stab}

\DeclareMathOperator{\vol}{vol}

\newcommand{\im}{\mathrm{im}}

\newcommand{\POO}{\mathbf{PO}}
\newcommand{\OO}{\mathbf{O}}
\newcommand{\POp}{\POO^+}
\newcommand{\GO}{\mathrm{GO}}

\newcommand{\PV}{\mathbf{P}V}

\newcommand{\Z}{\mathbb Z}
\newcommand{\Q}{\mathbb Q}
\newcommand{\R}{\mathbb R}
\newcommand{\C}{\mathbb C}

\renewcommand{\O}{\mathcal O}

\newcommand{\La}{\mathcal{L}}


\newcommand{\V}{V}


\newcommand{\Hy}{\mathbf{H}}

\renewcommand{\P}{\mathbf P}
\newcommand{\G}{\mathbf G}

\newcommand{\sG}{\widetilde{\mathbf{G}}}

\newcommand{\ZZ}{\mathbf Z}

\newcommand{\bs}{\backslash}

\newcommand{\Ad}{\mathrm{Ad}}
\newcommand{\g}{\mathfrak{g}}

\newcommand{\Gal}{\mathrm{Gal}}


 \newcommand{\Oplus}{\bigoplus}

\newcommand{\longto}{\longrightarrow}   
\newcommand{\inj}{\hookrightarrow}     

\newcommand{\surj}{\twoheadrightarrow}

\newcommand{\tM}{\widetilde M}

\newcommand{\aPO}{\mathbf{PO}}

\newcommand{\diag}{\mathrm{diag}}

\begin{document}
\title{Hyperbolic manifolds and pseudo-arithmeticity}

\begin{abstract} We introduce and motivate a notion of pseudo-arithmeticity,
        which possibly applies to all lattices in $\PO(n,1)$ with $n>3$. We further show that under
        an additional assumption (satisfied in all known cases), the covolumes of these lattices correspond to rational linear
        combinations of special values of $L$-functions.
\end{abstract}

\author{Vincent Emery and Olivier Mila}
\thanks{This work is supported by the Swiss National Science Foundation, Project number PP00P2\_157583}

\address{
Universit\"at Bern\\
Mathematisches Institut\\
Sidlerstrasse 5\\
CH-3012 Bern\\
Switzerland
}
\email{vincent.emery@math.ch}
\email{olivier.mila@math.unibe.ch}

\date{\today}


\maketitle

\section{Introduction}
\label{sec:intro}

The study of hyperbolic $3$-manifolds of finite volume has many relations with number theory, with a
central role in this context being played by the notion of the \emph{invariant trace field} (see
\cite{MaclReid03}).  The work of Vinberg \cite{Vinb71} allows to define a comparable invariant (the
\emph{adjoint trace field}) for any locally symmetric space. This paper studies finite volume
hyperbolic manifolds of dimensions $n>3$ from the perspective of the adjoint trace field, and the
algebraic groups that are naturally associated with them (see Theorem~\ref{thm:Vinberg}).  

Our main purpose is to introduce and motivate a notion of ``pseudo-arithmeticity'', which to the best
of our knowledge applies to all {\em currently known} lattices in $\PO(n,1)$ with $n>3$. In this
introduction we state two main results: Theorem~\ref{thm:k-G-glueings}, where we show that the
classical hyperbolic manifolds obtained by glueing are pseudo-arithmetic; and
Theorem~\ref{thm:vol-odd}, in which it is proved that under some mild assumption (defined in
Sect.~\ref{sec:first-type}) the covolumes of pseudo-arithmetic lattices correspond to rational
linear combinations of covolumes of arithmetic lattices.

\subsection{Hyperbolic isometries and algebraic groups}
\label{sec:hyp-isom}

We denote by $\Hy^n$ the hyperbolic $n$-space, and we identify its group of
isometries with the Lie group  $\PO(n,1)$.  Let  $\POO$ denote the
real algebraic group such that $\POO(\R) = \PO(n,1)$, and $\POp$ its identity component.
Note the following dichotomy:
\begin{itemize}
        \item for $n$ even $\POO$ is connected, so that $\POp(\R) = \PO(n,1)$;
        \item for $n$ odd  $\POO$ has two connected components, and we have $\POp(\R) = \PO(n,1)^\circ$. 
\end{itemize}

Recall that any complete hyperbolic $n$-manifold $M$ can be written as a quotient $M = \Gamma\bs\Hy^n$, where
$\Gamma \subset \PO(n,1)$ is a torsion-free discrete subgroup (uniquely defined up to conjugacy).
The manifold $M$ has finite volume exactly when $\Gamma$ is a lattice in $\PO(n,1)$. We will usually assume that
$M$ is orientable, i.e., $\Gamma \subset \PO(n,1)^\circ$. Then by Borel's density theorem $\Gamma$ is Zariski-dense in
$\POp$.

Let $K/k$ be a field extension; for an
algebraic $k$-group $\G$ the symbol $\G_K$ denotes the $K$-group obtained by scalar extension (base
change induced by $k \subset K$). In this case $\G$ is said to be a \emph{$k$-form} of $\G_K$.
Assume that  $k$ is a number field, with ring of integers $\O_k$. Then a $k$-form $\G$ of $\POp$ (or of
$\POO$) is called \emph{admissible} (for $\PO(n,1)$) if the Lie group $\G(k\otimes_\Q \R)^\circ$
contains exactly one noncompact factor, isomorphic to $\PO(n,1)^\circ$. In this case $\G(\O_k)$ is 
a lattice in $\PO(n,1)$.

\begin{rmk}
       \label{rmk:admiss-tot-real}
       For $n>3$ the admissibility condition implies that $k$ is totally real.   
\end{rmk}

\subsection{Trace fields and ambient groups}
\label{sec:trace-ambient}

For any subgroup $\Gamma \subset \PO(n,1)$ with $n>3$, we define its \emph{(adjoint) trace field} as 
the subfield of $\R$ given by
\begin{align}
        k &= \Q(\left\{  \mathrm{tr}(\Ad \gamma) | \gamma \in  \Gamma)\right\}),
        \label{eq:adj-trace-field}
\end{align}
where $\Ad: \PO(n,1) \to \GL(\g)$ is the adjoint representation. In case $\Gamma$ is a lattice,
it follows from Weil's local rigidity that $k$ is a number field (see \cite[Prop.~1.6.5]{OniVinb-II}). 
The work of Vinberg \cite{Vinb71} shows the following.
\begin{theorem}[Vinberg]
        Let $\Gamma$ be a Zariski-dense subgroup of $\POp(\R)$ (resp.\ of $\POO(\R)$),
        with trace field $k$. Then
        \begin{enumerate}
                \item $k$ is an invariant of the commensurability class of $\Gamma$; 
                \item there exists a $k$-form $\G$ of $\POp$ (resp.\ of $\POO$) such that 
                        $\Gamma \subset \G(k)$.
        \end{enumerate}
        \label{thm:Vinberg}
\end{theorem}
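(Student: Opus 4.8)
The plan is to follow Vinberg's original strategy, which produces the field $k$ and the group $\G$ simultaneously from the adjoint representation. Write $G = \POp(\R)$ (resp.\ $\POO(\R)$), with Lie algebra $\g$, and let $\rho = \Ad \colon G \to \GL(\g)$ be the adjoint representation, which is an embedding of algebraic groups since $\G^{\mathrm{ad}} = \G$ in both cases. Since $\Gamma$ is Zariski-dense in the semisimple group $\POp$ (resp.\ in $\POO$), the associative subalgebra $E \subset \End(\g)$ generated by $\rho(\Gamma)$ is, after extension of scalars to $\R$, all of $\End(\g)$ (this is essentially the Jacobson density theorem applied to the absolutely irreducible $\g$-module $\g$; for $n>3$ the adjoint representation of $\mathfrak{so}(n,1)$ is absolutely irreducible). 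The first step is to show that the $\Q$-algebra generated by $\rho(\Gamma)$, together with its natural trace form, has all structure constants in $k$. Concretely, one picks $e_1,\dots,e_N \in \rho(\Gamma)$ forming an $\R$-basis of $\End(\g)$ (possible by density), and shows that in this basis the multiplication constants of $E$ and the entries of each $\rho(\gamma)$ lie in $k$; the key point is that for $a \in \End(\g)$ the coordinates of $a$ with respect to such a basis can be recovered from the numbers $\mathrm{tr}(\rho(\gamma_i)\cdot a)$ via Cramer's rule applied to the Gram matrix of the trace form, and when $a = \rho(\gamma)$ these traces are $\mathrm{tr}(\rho(\gamma_i \gamma)) \in k$ by definition of $k$ in \eqref{eq:adj-trace-field}.

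With that in hand, one defines $\g_k \subset \g$ to be the $k$-span of a suitable $k$-basis of the $k$-algebra $E_k$ generated by $\rho(\Gamma)$ inside $\End(\g)$, arranged so that $E_k \otimes_k \R = \End(\g)$ and so that $\g_k$ is stable under $\rho(\Gamma)$ and under the Lie bracket inherited from $\g$ (the bracket structure constants lie in $k$ by the same trace-recovery argument applied to $[\,\cdot\,,\,\cdot\,]$, which is itself an element of $\End(\g)\otimes\End(\g)^* $ expressible in $\rho(\Gamma)$). Then $\g_k$ is a $k$-Lie algebra with $\g_k \otimes_k \R \cong \g$, and $\G := \underline{\Aut}(\g_k)^\circ$ (the identity component of its automorphism group, a $k$-algebraic group) is a $k$-form of $\POp$; in the disconnected case one takes $\G = \underline{\Aut}(\g_k)$ to capture $\POO$. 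Since each $\rho(\gamma)$ preserves $\g_k$ and the bracket, and lies in the identity component, we get $\Gamma \cong \rho(\Gamma) \subset \G(k)$, which is part~(2).

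For part~(1), commensurability invariance of $k$: if $\Gamma' \subset \PO(n,1)$ is commensurable with $\Gamma$ (up to conjugacy), then $\Gamma \cap \Gamma'$ is Zariski-dense in $\POp$ (again Borel density, being finite index in a lattice) and of finite index in both, so it suffices to show $k$ is unchanged when passing to a finite-index subgroup $\Gamma_0 \subset \Gamma$. One inclusion $k(\Gamma_0) \subset k(\Gamma)$ is trivial. For the reverse, since $\Gamma_0$ is Zariski-dense, the construction above already produces from $\Gamma_0$ a $k(\Gamma_0)$-form of $\g$ inside $\End(\g)$; every $\rho(\gamma)$ with $\gamma\in\Gamma$ normalizes $\rho(\Gamma_0)$ hence acts on this $k(\Gamma_0)$-structure, and a standard Galois-descent / finite-orbit argument (each $\gamma$ has finitely many conjugates of its eigenvalue data under the Galois action fixing $k(\Gamma_0)$) forces $\mathrm{tr}(\rho(\gamma)) \in k(\Gamma_0)$.

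The main obstacle is the algebra-density step: pinning down that the $\Q$-algebra generated by $\rho(\Gamma)$ is a $\Q$-form of $\End(\g)$ after scalar extension to $\R$, and that choosing the $k$-basis carefully makes \emph{both} the multiplication and the Lie bracket defined over $k$ simultaneously, so that $\G$ is genuinely a form of $\POO$ and not merely some $k$-group with the right complexification. Handling the component-group subtlety for $n$ odd (distinguishing $\POp$ from $\POO$) is a further technical wrinkle but not a serious one. I would quote \cite{Vinb71} for the details of this core construction.
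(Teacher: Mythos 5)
The paper gives no proof of this theorem: it is quoted as a result of Vinberg, with the reference \cite{Vinb71}. Your sketch is a reasonable outline of Vinberg's original argument, and you are right that the Zariski-density/Burnside step and the trace-recovery (Cramer's rule against the trace form) are the essential ingredients; deferring the technical core to Vinberg's paper is also what the present article does.

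Two points where your sketch is loose enough to risk hiding a real issue. First, the commensurability argument: saying that each $\gamma\in\Gamma$ has ``finitely many conjugates of its eigenvalue data'' under $\Gal(\C/k(\Gamma_0))$ only shows that $\mathrm{tr}(\Ad\gamma)$ is \emph{algebraic} over $k(\Gamma_0)$, not that it lies in $k(\Gamma_0)$. The actual descent uses that for $\sigma$ fixing $k(\Gamma_0)$ pointwise, the element $\sigma(\Ad\gamma)\cdot\Ad(\gamma)^{-1}$ centralizes the Zariski-dense set $\Ad(\Gamma_0)$ inside the adjoint group, hence is trivial; thus $\sigma(\Ad\gamma)=\Ad\gamma$ for all such $\sigma$, which gives $k(\Gamma_0)$-rationality of the matrix entries and in particular of the trace. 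Second, your plan to build a $k$-Lie algebra $\g_k$ and take $\G$ to be (the identity component of) its automorphism group implicitly assumes the central simple $k$-algebra $E_k$ generated by $\Ad(\Gamma)$ is split: if $E_k$ were a matrix algebra over a nontrivial $k$-division algebra, there would be no $k$-structure on $\g$ compatible with the $E_k$-action (a dimension count over $k$ rules it out). Vinberg sidesteps this by showing directly that the defining ideal of the Zariski closure of $\Ad(\Gamma)$ inside $\GL(\g)$ is $\Gal(\C/k)$-stable, which yields $\G$ as a closed $k$-subgroup scheme of $\GL_N$ without first producing a $k$-form of $\g$; the splitting of $E_k$ then follows a posteriori.
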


Since $\Gamma$ is Zariski-dense, the group $\G$ is uniquely determined by $\Gamma$ (up to
$k$-isomorphism), and it is a commensurability invariant.  We call it the \emph{ambient group} of $\Gamma$.
If $\G$ is admissible then $\Gamma$ is called \emph{quasi-arithmetic}.
If moreover $\Gamma$ is commensurable with $\G(\O_k)$, it is called \emph{arithmetic}.
We use the same terminology for the corresponding quotient $\Gamma\bs\Hy^n$.

\begin{rmk}
       \label{rmk:field-of-def}
       This definition of arithmetic lattices differs from the usual one (see for instance
       \cite[Sect.~3.6]{OniVinb-II}); but it is
       equivalent to it. The equivalence essentially follows from the fact that trace fields of arithmetic lattices
       coincide with their fields of definition (see \cite[Lemma~2.6]{PraRap09}).
\end{rmk}

\begin{rmk}
        \label{rmk:dim-3-trace-field}
       For $n=3$ the group $\PO(3,1)^\circ$ is isomorphic to $G = \PGL_2(\C)$ and thus carries a complex
       structure. Taking the \emph{complex} adjoint representation $G \to \GL(\g_\C)$ instead in the
       definition, the adjoint trace field coincides with the ``invariant trace field'' (see
       \cite[Exercises~3.3~(4)]{MaclReid03}).
\end{rmk}

\subsection{The case of glued manifolds}
\label{sec:intro-glued-manif}

To date for $n>3$ there are two sources of nonarithmetic lattices in $\PO(n,1)$:
\begin{enumerate}
        \item Hyperbolic reflection groups\footnote{Note that hyperbolic reflections groups (of finite covolume) cannot 
                        exist for $n\ge 996$, and no examples are known for $n > 21$ (see \cite[Sect.~1]{Bel16}).}, some of which can be proved to be
                nonarithmetic by using Vinberg's criterion \cite{Vinb67}.
        \item Hyperbolic manifolds constructed by glueing together ``pieces'' of arithmetic manifolds  along pairwise isometric totally geodesic hypersurfaces.
\end{enumerate}
There are several constructions that fit the description in (2) (see \cite{Raim12,GelLev14,BelThom11}), all of which can be thought as 
(clever) variations of the original method by Gromov and Piateski-Shapiro \cite{GroPS87}.  
Their building blocks are always \emph{arithmetic pieces} $M_i$, i.e.,
hyperbolic $n$-manifolds with totally geodesic boundary taken inside arithmetic manifolds
$\Gamma_i\bs\Hy^n$ (see Sect.~\ref{sec:hyper-pieces}--\ref{sec:arithm-pieces}).  We will see in Corollary
\ref{cor:glue-same-k} that two such pieces $M_i$ and $M_j$ cannot be glued together
unless $\Gamma_i$ and $\Gamma_j$ have the same field of definition (equivalently, the same trace field). 

The following theorem applies to all nonarithmetic lattices constructed in the sense of (2). By
a \emph{multiquadratic} extension $K/k$ we mean a  (possibly trivial)  field extension of the form
$K = k(\sqrt{a_1},\dots, \sqrt{a_s})$ ($a_i \in k$, $s\ge 0$).

\begin{theorem}
        For $n>3$, let $M = \Gamma\bs\Hy^n$ be a hyperbolic $n$-manifold  obtained by glueing together a finite number of
        arithmetic pieces $M_i \subset \Gamma_i\bs\Hy^n$, each of the arithmetic lattices $\Gamma_i$ being defined over $k$.
        Then: 
        \begin{enumerate}
                \item The trace field $K$ of $\Gamma$ is a multiquadratic extension of $k$.
                \item The ambient group of $\Gamma$ is $\G_K$, where $\G$ is the ambient group of any
                        of the arithmetic lattices $\Gamma_i$. 
        \end{enumerate}
        \label{thm:k-G-glueings}
\end{theorem}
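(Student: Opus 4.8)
The plan is to analyze the manifold $M$ through the combinatorial structure of the glueing, using that each piece $M_i$ sits inside the arithmetic manifold $\Gamma_i \bs \Hy^n$ with a common field of definition $k$ and (by Corollary~\ref{cor:glue-same-k}) a common ambient group $\G$ defined over $k$. First I would set up the dual graph of the glueing: vertices correspond to the pieces $M_i$, edges to the glued totally geodesic hypersurfaces. For each piece, Vinberg's theorem (Theorem~\ref{thm:Vinberg}) gives $\Gamma_i \subset \G(k)$ after identifying all the ambient groups over a fixed $k$-embedding; concretely one fixes a quadratic space (or the $\PO$ of one) over $k$ realizing $\G$, so that each $\Gamma_i$ acts by $k$-points of $\POO$. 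The fundamental group $\Gamma$ of $M$ is then generated, via van Kampen / the developing map, by the groups $\Gamma_i$ together with the ``transition'' isometries $g_e \in \PO(n,1)$ attached to the edges $e$, which conjugate the stabilizer of a hypersurface on one side to the stabilizer on the other.

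The heart of the argument is to show that each transition element $g_e$, and hence all of $\Gamma$, lies in $\G(K)$ for a multiquadratic extension $K/k$, and that $\G_K$ is exactly the ambient group. The key local fact is that a totally geodesic hypersurface $H \subset \Hy^n$ corresponds to a codimension-one subspace, i.e.\ to a vector $v$ with $\la v,v\ra \ne 0$, and the reflection in $H$ is the associated orthogonal reflection. When two pieces are glued along isometric hypersurfaces $H_i$ and $H_j$, the isometry realizing the glueing sends a defining vector $v_i$ to a scalar multiple of $v_j$; because both hypersurfaces come from $k$-rational data inside $\G$, the relevant ratio of norms $\la v_i, v_i\ra / \la v_j, v_j\ra$ lies in $k$, and the glueing isometry can be chosen to differ from a $k$-rational map by an element adjusting this norm — which after passing to a square root $\sqrt{a}$, $a = \la v_i,v_i\ra/\la v_j,v_j\ra \in k^\times$, becomes $K$-rational. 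Iterating over all edges of the (finite) dual graph adjoins finitely many square roots $\sqrt{a_1}, \dots, \sqrt{a_s}$, giving $K = k(\sqrt{a_1}, \dots, \sqrt{a_s})$. Thus $\Gamma \subset \G(K)$, and since $\Gamma$ is Zariski-dense in $\POp$ and contains each $\Gamma_i$, which is already Zariski-dense over $k$, one checks that $K$ is exactly the trace field: no smaller field works because $\Gamma$ genuinely contains the transition elements (a nonarithmetic $\Gamma$ cannot have trace field $k$, as $\G$ is admissible), and no larger field is needed by the explicit construction. The identification of $\G_K$ as the ambient group is then Vinberg uniqueness (Theorem~\ref{thm:Vinberg} and the remark following it): the ambient group is the unique $K$-form containing $\Gamma$, and $\G_K$ works.

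The main obstacle I anticipate is the precise bookkeeping in the paragraph above: making rigorous the claim that the transition isometry is $k$-rational "up to a single square root", i.e.\ controlling exactly which algebraic data of the glueing hypersurface are defined over $k$ versus which require a quadratic extension. This requires a careful description of the stabilizer in $\POO$ of a hyperplane — it is (the projectivization of) $\Ort(v^\perp) \times \Ort(\la v \ra)$ type data — and of how an ambient isometry matching two such hyperplanes is parametrized; one must show that the "orientation/normalization" ambiguity is exactly a $\mu_2$-torsor, so that after base change to $k(\sqrt{a_e})$ a rational choice exists. A secondary subtlety is ensuring the square-root elements $a_e$ can all be taken in $k$ simultaneously and that the resulting $K$ is independent of choices up to the stated form; this follows once one observes that changing the $k$-rational representative $v_i$ of a hypersurface rescales $a_e$ by a square in $k^\times$, leaving $K$ unchanged. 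Everything else — Zariski-density, the van Kampen presentation of $\Gamma$, and the uniqueness of the ambient group — is either standard or already available in the excerpt.
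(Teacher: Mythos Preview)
Your overall strategy---organize the glueing by a dual graph, show each edge contributes at most one square root, conclude $K/k$ is multiquadratic---is the right one and is essentially how the paper's inductive proof (Theorem~\ref{thm:trace-group-of-pieces}) is structured. But there is a genuine gap in your setup that propagates through the rest of the argument.

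You assert, citing Corollary~\ref{cor:glue-same-k}, that the pieces share a common ambient group $\G$ over $k$, so that ``each $\Gamma_i$ acts by $k$-points of $\POO$'' for a single quadratic space. Corollary~\ref{cor:glue-same-k} only gives a common \emph{field}; the ambient groups $\G_i$ of the $\Gamma_i$ are in general pairwise non-isomorphic $k$-forms of $\POp$. Indeed this is precisely the content of the Gromov--Piatetski-Shapiro construction: one glues pieces coming from \emph{non-commensurable} arithmetic lattices, and such lattices have non-isomorphic ambient $k$-groups. So one cannot place all the $\Gamma_i$ inside $\POO_f(k)$ for a single $f$ over $k$, and the square roots do not arise from ``transition isometries $g_e$'' internal to a fixed $\G$.

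What actually happens at an edge is this: the isometric boundary hypersurfaces lift (by Prop.~\ref{prop:hypersurface-k-rat}) to $k$-rational hyperplanes $\Hy(W_i)\subset\Hy(V_i,f_i)$ and $\Hy(W_j)\subset\Hy(V_j,f_j)$ sitting in \emph{different} quadratic $k$-spaces, and the glueing gives a $k$-similitude $\varphi: W_i \to W_j$ with multiplier $\lambda$. Extending $\varphi$ to a similitude $V_i \to V_j$ requires matching the one-dimensional orthogonal complements, which forces the extension $F = k(\sqrt{\lambda/\alpha})$ where $\alpha$ is the ratio of the norms of the normal vectors (this is Lemma~\ref{lem:extend-similitudes}); your formula $a=\la v_i,v_i\ra/\la v_j,v_j\ra$ omits the factor $\lambda$. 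Conjugation by the extended similitude then transports $\Gamma_j$ from $\G_j(k)$ into $\G_i(F)$, and iterating along the graph lands everything in $\G(K)$ for the starting group $\G=\G_{i_0}$ with $K$ multiquadratic over $k$. This also explains part~(2) for \emph{any} of the $\G_i$: the choice of starting piece is arbitrary. A minor additional point: you need not argue that the constructed $K$ equals the trace field exactly; it suffices that the trace field sits between $k$ and the constructed multiquadratic extension, since any intermediate field of a multiquadratic extension is again multiquadratic.
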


The exact description of the trace field $K$ -- and in particular the degree of $K/k$ -- depends on
the pieces $M_i$ and the way they are glued together. A precise treatment is the subject of a
separate article by the second author \cite{Mila}.


\subsection{Pseudo-arithmetic lattices}
\label{sec:intro-pseudo-arithm}

Theorem~\ref{thm:k-G-glueings} motivates the following definitions:

\begin{definition}[pseudo-admissible]
        We call \emph{pseudo-admissible} (over $K/k$) an algebraic group of the form $\G_K$, where $K/k$ is
        a finite (possibly trivial) real multiquadratic extension and $\G$ is an admissible $k$-group. 
\end{definition}

\begin{definition}[pseudo-arithmetic]
        \label{def:pseudo-arithm}
        Let $n>3$. A lattice $\Gamma \subset \PO(n,1)$ will be called \emph{pseudo-arithmetic} if its ambient
        group is pseudo-admissible.
\end{definition}

It follows from the definition that (quasi-)arithmetic lattices are pseudo-arithmetic. Theorem
\ref{thm:k-G-glueings} shows that lattices obtained by glueing arithmetic pieces are
pseudo-arithmetic. 

In Sect.~\ref{sec:appendix} we will present a method to test if a given
reflection group is pseudo-arithmetic or not; this can be thought as an extension of Vinberg's 
(quasi-)arithmeticity criterion \cite{Vinb67}. We have applied our method on the full list of
groups provided with the software CoxIter (see \cite{coxiter}), which contains about a hundred
non-quasi-arithmetic Coxeter groups for $n>3$: all of them turn out to be pseudo-arithmetic. A particularly interesting example 
is the Coxeter group $\Delta_5$ presented in Figure~\ref{fig:Cox-5}; it has recently been shown by
Fisher et.\ al.\ \cite[Sect.~6.2]{Fisher-et-al} that $\Delta_5$ is \emph{not} commensurable with any lattice obtained by glueing arithmetic pieces.
At this point it is natural to ask:
\pgfmathsetmacro{\minsize}{0.2cm}
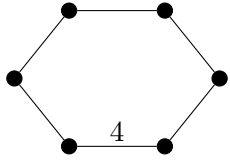
\begin{figure}[h]
\begin{tikzpicture}[scale=0.9]
\tikzstyle{every node}=[draw,shape=circle,minimum size=\minsize,inner sep=0, fill=black]
\node at (0,1) (1) {};
\node at (0.8,0) (2) {};
\node at (2.2,0) (3) {};
\node at (3,1) (4) {};
\node at (2.2,2) (5) {};
\node at (0.8,2) (6) {};
\draw (1)--(2) ;
\draw (2)--(3) node[midway,above=1pt, opacity=0, text opacity=1]  {4};
\draw (3)--(4) ;
\draw (4)--(5) ;
\draw (5)--(6) ;
\draw (6)--(1) ;
\end{tikzpicture}
        \caption{A pseudo-arithmetic Coxeter $5$-simplex group: $\Delta_5$}
        \label{fig:Cox-5}
\end{figure}

\begin{question}
        \label{quest:all-pseudo}
        Let $\Gamma \subset \PO(n,1)$ be a lattice, with $n>3$. Is $\Gamma$ necessarily
        pseudo-arithmetic?
\end{question}

\begin{rmk}
        In dimension $n = 3$ the abundance of hyperbolic manifolds obtained by Dehn surgeries makes
        very unlikely a positive answer to an analogue of this question.
\end{rmk}

Note that our definition implies that the trace field $K$ of a pseudo-arithmetic lattice is (real)
multiquadratic over a totally real number field, and thus $K$ itself is totally real. The following
question -- thus weaker than Question \ref{quest:all-pseudo} -- seems easier to answer, yet we do
not know its current status.

\begin{question}
        \label{quest:all-tot-real}
        Let $n>3$. Is the trace field of a lattice $\Gamma \subset \PO(n,1)$ necessarily totally real? 
\end{question}

\subsection{First type lattices}
\label{sec:first-type}

Let $k \subset \R$ be a number field.
A pseudo-arithmetic lattice over $K/k$ is said to be \emph{of the first type} if its ambient group is 
of the form $\G = \POO_{f, K}$ (resp., $\POp_{f,K}$), where $\POO_f$ denotes the projective orthogonal
group of an admissible quadratic form $f$ over $k$ (resp., its identity component); see
Sect.~\ref{sec:models} for details.
This extends the terminology sometimes used in the arithmetic setting. For $n$ even, any $k$-form 
of $\POO$ is of the type $\POO_f$, and thus for those dimensions all pseudo-arithmetic lattices are
trivially of the first type. It follows from part (2) of Theorem~\ref{thm:k-G-glueings} that lattices obtained by glueing 
arithmetic pieces are of the first type, since such pieces contain totally geodesic hypersurfaces
(see Prop.~\ref{prop:hypersurface-k-rat}).
For $n>3$ odd, it is a priori not clear if any pseudo-arithmetic Coxeter group is of the first type
(this is true for $K=k$ though, again by Prop.~\ref{prop:hypersurface-k-rat}), but this turns out to be
true for the full list of reflection groups studied in Sect.~\ref{sec:appendix}.
To summarize, all \emph{currently known} lattices in $\PO(n,1)$ with $n>3$ are
either:
\begin{itemize}
        \item arithmetic, or
        \item pseudo-arithmetic of the first type. 
\end{itemize}
A lattice simultaneously belongs to both categories if and only if it is arithmetic of the first type
(sometimes also called ``standard'' arithmetic lattices).

\subsection{Volumes}
\label{sec:intro-volumes}

Let $\G = \POO_{f,K}$ (or $\POp_{f,K}$) be a pseudo-admis\-sible group over $K/k$ of the first type.
Then we may associate with $\G$ a set of arithmetic lattices in the following  way. Let us 
assume that $K$ is explicitly given by $K = k(\sqrt{a_1}, \dots, \sqrt{a_r})$, i.e., the degree of
$K/k$ is $s = 2^r$. We will assume (as we may) that the quadratic form $f$ is 
diagonal in the variables $x_0, \dots, x_n$, with negative coefficient in $x_0$.
For any multi-index $i \in \left\{ 0, 1 \right\}^r$ we set $\alpha_i = \sqrt{a_1^{i_1}
\cdots a_r^{i_r}}$, and $f_i = f(x_0, \dots, x_{n-1}, \alpha_i x_n)$. All $f_i$ are admissible
quadratic forms over $k$. For each $i \in \left\{ 0,1 \right\}^r$ we choose an arithmetic subgroup
$\Gamma_i \subset \POO_{f_i}(k)$, i.e., a subgroup commensurable with $\POO_{f_i}(\O_k)$.
We say that the set of arithmetic lattices $\left\{ \Gamma_i \;|\; 0\le i\le 2^r \right\}$ is
\emph{subordinated} to $\G$.

We will show that -- in some precise sense -- the homology over $\Q$ of the group $\G(K)$  is generated by
classes associated with the $\Gamma_i$; see Theorem \ref{th:La-generated}. A direct consequence for the
volume is the following.

\begin{theorem}
        Let $\Gamma \subset \PO(n,1)$ be a pseudo-arithmetic lattice of the first type (with $n>3$), and
        $\left\{ \Gamma_i \right\}$ a set of arithmetic lattices subordinated to the ambient group
        of $\Gamma$. Then
        \begin{align}
                \vol(\Gamma\bs\Hy^n) &= \sum_i \beta_i \vol(\Gamma_i\bs\Hy^n) \label{eq:vol-Q}
        \end{align}
        for some $\beta_i \in \Q$.
        \label{thm:vol-odd}
\end{theorem}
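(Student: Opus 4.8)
The plan is to realise $\vol(\Gamma\bs\Hy^n)$ as the value of a pairing between a ``fundamental class'' in the rational homology of $\G(K)$ and the invariant volume cocycle, and then to invoke Theorem~\ref{th:La-generated} to expand that class as a rational combination of the analogous classes of the subordinated arithmetic lattices $\Gamma_i$. First I would dispose of the case $n$ even, which is immediate: by Gauss--Bonnet--Chern the covolume of any lattice in $\PO(n,1)$ equals a fixed nonzero constant $c_n$ (depending only on $n$) times its (orbifold) Euler characteristic, and $\chi(\Gamma\bs\Hy^n)$, $\chi(\Gamma_0\bs\Hy^n)$ are rational with the latter nonzero; so $\beta_0=\chi(\Gamma\bs\Hy^n)/\chi(\Gamma_0\bs\Hy^n)\in\Q$ and $\beta_i=0$ for $i\neq 0$ do the job --- consistently with the ``odd'' in the name of the statement. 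So from now on $n>3$ is odd.

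Next I would fix the generator $\omega$ of the one-dimensional space $\Hct^n(\PO(n,1);\R)$, namely the class of the $\PO(n,1)$-invariant volume form. By functoriality of continuous cohomology, pulling $\omega$ back along any abstract homomorphism into $\PO(n,1)$ gives a class in the $n$-th cohomology of the source; applied to a lattice $\Lambda\subset\PO(n,1)$ this produces $\omega_\Lambda\in H^n(\Lambda;\R)$ with $\la\omega_\Lambda,[\Lambda]\ra=\vol(\Lambda\bs\Hy^n)$ up to a single universal normalisation, where $[\Lambda]$ is the fundamental class of $\Lambda\bs\Hy^n$ (in the non-cocompact case, the relative Borel--Serre fundamental class, and the pairing the corresponding relative one --- see below). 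Since replacing a lattice by a torsion-free subgroup of finite index multiplies both its covolume and the would-be $\beta_i$ by rational factors, I may assume $\Gamma$ and the $\Gamma_i$ torsion-free.

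Now I would place everything inside $\G(K)$. By Theorem~\ref{thm:Vinberg}, $\Gamma\subset\G(K)$ since $\G=\POO_{f,K}$ (resp.\ $\POp_{f,K}$) is its ambient group. For each $i\in\{0,1\}^r$, over $K$ the substitution $x_n\mapsto\alpha_i^{-1}x_n$ carries $f_i$ to $f$ and hence is a $K$-isomorphism $\POO_{f_i,K}\xrightarrow{\ \sim\ }\POO_{f,K}=\G$, so $\Gamma_i\subset\POO_{f_i}(k)\subset\POO_{f_i}(K)\cong\G(K)$. Fixing the real place of $K$ at which $f$ --- hence each $f_i$ --- has signature $(n,1)$ (the very data making $\{\Gamma_i\}$ subordinated to $\G$) gives an abstract homomorphism $\G(K)\to\PO(n,1)$ compatible with the lattice embeddings of $\Gamma$ and of all the $\Gamma_i$. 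Pulling $\omega$ back along it yields $\omega_{\G(K)}\in H^n(\G(K);\R)$ restricting to the volume class on each of $\Gamma,\Gamma_i$; pushing fundamental classes forward yields $\mu_\Gamma,\mu_i\in H_n(\G(K);\Q)$ with $\la\omega_{\G(K)},\mu_\Gamma\ra=\vol(\Gamma\bs\Hy^n)$ and $\la\omega_{\G(K)},\mu_i\ra=\vol(\Gamma_i\bs\Hy^n)$, for one and the same normalisation. Finally $\mu_\Gamma$ lies in the subspace $\La\subset H_n(\G(K);\Q)$ of Theorem~\ref{th:La-generated} (being the fundamental class of a finite-covolume lattice in $\G(K)$), and that theorem gives $\La=\sum_i\Q\,\mu_i$; hence $\mu_\Gamma=\sum_i\beta_i\mu_i$ with $\beta_i\in\Q$. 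Pairing both sides with $\omega_{\G(K)}$ the normalisation cancels and \eqref{eq:vol-Q} follows, orientation signs (relevant when one works with $\POp$ for $n$ odd) being absorbed into the $\beta_i$.

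The step I expect to be the main obstacle is the non-cocompact case, which is the typical one (e.g.\ for glued manifolds): there the invariant volume form is cohomologically trivial on $\Gamma\bs\Hy^n$ in ordinary degree $n$, its mass being carried by the cusps, so both the identity $\vol(\Gamma\bs\Hy^n)=\la\omega_\Gamma,[\Gamma]\ra$ and Theorem~\ref{th:La-generated} have to be set up in a relative (Borel--Serre) framework, with a coherent treatment of the cusp subgroups of $\Gamma$, of the $\Gamma_i$ and of $\G(K)$, so that the single rational expansion of $\mu_\Gamma$ simultaneously controls the interior and the boundary contributions --- this is where the work of the preceding sections is genuinely used. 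Granting that framework, the rest is bookkeeping with functoriality of (relative, continuous) group (co)homology together with the elementary scaling arguments above.
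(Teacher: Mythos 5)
Your proposal matches the paper's proof in substance: the even case is disposed of by Gauss--Bonnet exactly as you say, and for $n$ odd the theorem is reduced to Theorem~\ref{th:La-generated} together with a linear ``volume'' functional on group homology \emph{relative to the geometric boundary} $\Omega=\partial\Hy^n$ (Prop.~\ref{prop:vol-homology} and Lemma~\ref{lem:fund-in-La}, both imported from \cite{Eme-quasi}), after which pairing a rational expansion $[\Gamma]=\sum_i\beta_i[\Gamma_i]$ with that functional is exactly the bookkeeping you describe. The only differences are presentational: the paper works dually, with an explicit $\Q$-linear map $v\colon H_n(\PO(n,1),\Omega;\Q)\to\R$ rather than a pairing against a pulled-back continuous-cohomology class, and the relative framework it uses to tame the non-cocompact case is homology relative to $\Omega$ rather than Borel--Serre (so $\La(\G)$ sits in $H_n(\G(\R),\Omega;\Q)$, not in $H_n(\G(K);\Q)$ as you wrote)---but your instinct that this relative set-up, coherent across $\Gamma$, the $\Gamma_i$ and $\G(K)$, is the real content and that Theorem~\ref{th:La-generated} is already stated so as to provide it, is exactly right.
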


For $n$ even the result readily follows from the Gauss-Bonnet theorem, and Theorem~\ref{thm:vol-odd}
has no interest there (but Theorem~\ref{th:La-generated} presumably has). For manifolds obtained by glueing
the result was already known; see \cite[Sect.~1.4]{Eme-quasi}.  Note also that for quasi-arithmetic
lattices (which corresponds to the case $r=0$ above) the result is proved in \cite[Theorem~1.3]{Eme-quasi}, and the condition ``of the first type'' is superfluous.

The covolume of arithmetic lattices is well-understood: it is essentially expressible by means of special values of $L$-functions (see
\cite{Ono66,Pra89}). In particular the values of the summands $\vol(\Gamma_i\bs\Hy^n)$ can be determined up to
rationals. The following example illustrates Theorem \ref{thm:vol-odd} with the case of the Coxeter
group $\Delta_5$ (which was introduced in Sect.~\ref{sec:intro-pseudo-arithm}).

\begin{example}
        \label{ex:Delta_5}
        It was already proved in \cite[Sect.~III]{Vinb67} that the nonuniform Coxeter group $\Delta_5 \subset \PO(5,1)$ is
        nonarithmetic, of trace field $K= \Q(\sqrt{2})$. It is the unique hyperbolic {\em simplex} group that
        is nonarithmetic for $n>3$. Applying the procedure described in Sect.~\ref{sec:Coxeter trace field},
        one finds from its Gram matrix  that $\Delta_5 \subset \POO_f(K)$, with $f =
        f_0$ given in \eqref{eq:f0}. This shows that $\POO_{f,K}$ is pseudo-admissible, and thus $\Delta_5$ is
        pseudo-arithmetic of the first type. A subordinated set of arithmetic lattices 
        is given by $\left\{\Gamma_i = \aPO_{f_i}(\Z) \;|\; i=0,1 \right\}$, where
        \begin{align}
                f_0 &=  -x_0^2 + x_1^2 +\cdots + x_5^2; \label{eq:f0}\\
                f_1 &= -x_0^2 + x_1^2 +\cdots + 2 x_5^2. \nonumber
        \end{align}
        We have (see for instance \cite[Prop.~2.1]{Eme-quasi} and the references therein): 
        \begin{align*}
               \vol(\Gamma_0\bs\Hy^n) &\in \zeta(3) \cdot \Q^\times;\\ 
               \vol(\Gamma_1\bs\Hy^n) &\in \sqrt{2} L(\chi_8, 3) \cdot \Q^\times,
        \end{align*}
        where $L(\chi_8, \cdot)$ is the Dirichlet $L$-function with nontrivial primitive character mod $8$.
        Equivalently, $L(\chi_8, \cdot) = \zeta_{\Q(\sqrt{2})}/\zeta$; see \cite[Sect.~11]{Zagier81}. 

        The covolume of $\Delta_5$ has been studied by Kellerhals, who obtained a closed
        formula\footnote{Note that in \cite{Johnson-al-99} the value on p.350 differs from the one on p.338,
        and the former is not correct.}
        in \cite[Sect.~3]{Johnson-al-99} (where $\Delta_5 = \widehat{AU}_5$).
        Based on this, the following approximation
        has been found in collaboration with Steve Tschantz. Its accuracy has been checked up to 160 digits.  
        \begin{align}
                \label{eq:vol-D5}
                \vol(\Delta_5 \bs \Hy^5) &\approx \frac{73}{2^9\cdot3^2\cdot5} \zeta(3) +
                \frac{1}{2^3\cdot3^2\cdot5} \sqrt{2} L(\chi_8,3)\\
                &= 0.00757347442200786763497722\dots \nonumber
                \label{eq:Delta5-approx}
        \end{align}
\end{example}
Proving a sharp equality in \eqref{eq:vol-D5} seems out of reach with the methods presented in this
article. In particular, the method used to proved Theorem~\ref{thm:vol-odd} does not provide any
information about the $\beta_i$ (or even about their signs).

\subsection{Final remarks} We close this introduction with some natural questions arising in
connection with the results presented above:
\subsubsection{}%
         The definition of pseudo-arithmeticity can be transposed verbatim for lattices in
         $\PU(n,1)$. In particular, it would be interesting to know if all the nonarithmetic
         examples in $\PU(2,1)$ and $\PU(3,1)$ are ``pseudo-arithmetic'' in this sense.  

\subsubsection{}
In \cite[Question~0.4]{GroPS87} Gromov and Piateski-Shapiro famously asked
whether for any lattice $\Gamma \subset \PO(n,1)$ of sufficiently large
dimension the quotient $\Gamma\bs\Hy^n$ admits a nice partition into 
         ``subarithmetic pieces''. In some weak sense (i.e., at the level of homology),
         Theorem~\ref{th:La-generated} gives a positive answer for the class of pseudo-arithmetic
         lattices of the first type. It is not clear however if a statement closer to their original
         formulation can be achieved for those lattices; in particular, if a positive answer to the following could hold:   
         \begin{question} \label{question:GPS}
                 Let $\Gamma \subset \PO(n,1)$ be a pseudo-arithmetic lattice of the first type
                 ($n>3$), and let $\left\{\Gamma_i\right\}$ be a set of arithmetic lattices
                 subordinated to its ambient group. 
                 Does the collection of subsets $\left\{\Gamma \cap \Gamma_i\right\}$ generate the
                 group $\Gamma$? 
         \end{question}
               
         \subsubsection{} Let $\Gamma \subset \PO(n,1)^\circ$ be a lattice, with ambient group $\G$
         over $K$. For each infinite place $v$ we obtain an embedding $\Gamma \subset \G(K_v)$ with
         Zariski-dense image. If $\G$ is pseudo-admissible over $K/k$ then the group
         $\G(K\otimes_\Q \R) = \prod_{v|\infty}\G(K_v)$ contains exactly $[K:k]$ noncompact factors,
         all of which are locally isomorphic to $\PO(n,1)$. That is, all the factors in this product are
         of real rank at most $1$. Note that the latter property trivially holds for the ambient
         group of {\em any} lattice in $\PGL_2(\C)$. This motivates asking the following.
         \begin{question} \label{question:rigidity}
         Let $G$ be a simple real Lie group of rank $1$, and let $\Gamma \subset G$ be a lattice. Assume
         that there exists a representation $\phi: \Gamma \to H$ with Zariski-dense image into a
         simple real Lie group $H$. Is $H$ necessarily of rank $\le 1$?
         \end{question}

\subsection*{Acknowledgment}
We would like to thank Steve Tschantz for his help with the computation in Example
\ref{ex:Delta_5}.


\section{Pseudo-arithmeticity of glueings}
\label{sec:glueings}

The goal of this section is to prove Theorem \ref{thm:k-G-glueings}.

\subsection{Rational hyperplanes}
\label{sec:models}

Let $(V, f)$ be an admissible quadratic space over a number field $k$, i.e., $f: V \to k$ is an admissible
quadratic form.  We denote by $\OO(V,f)$, or simply $\OO_f$, the algebraic group of orthogonal
transformations of $(V, f)$. We denote by  $\POO(V,f) = \POO_f$ its adjoint form (it corresponds
to the group $\mathbf{PGO}(V,f)$ in the notation of \cite{BookInvol}). For a field extension $K/k$,
the group of $K$-points $\POO_f(K)$ can be concretely described as the quotient $\GO(V_K, f)/K^\times$,
where $\GO(V_K, f)$ is the group of similitudes of $V_K = V \otimes_k K$ (see
\cite[Sect.~12.A and Sect.~23.B]{BookInvol}). In particular, $\POO(V, f)$ acts $k$-rationally on the
projective space $\PV_\C$, and so does its identity component $\POp(V,f) = \POp_f$. 

The following is a model for the hyperbolic $n$-space $\Hy^n$, with group of isometries $\POO_f(\R)$:
\[
  \Hy(V, f) = \left\{ x \in \P V_\R \;|\; f(x) < 0 \right\}.
\]
Any hyperplane in $\Hy(V, f)$ corresponds to the image of a subspace $W \subset V_\R$, with $W =
e^\perp$ for some $e \in V_\R$ such that $f(e) > 0$. Such a hyperplane will be denoted by $\Hy(W) \subset
\Hy(V, f)$.  We shall say that $\Hy(W)$ is $k$-rational if $W$ is, i.e., $W = e^\perp$ for 
some $e \in V$. This is equivalent to saying that the projective space $\mathbf{P}W$ is
$k$-closed in $\PV_\R$.  A useful characterization is the following.

\begin{lemma} \label{lemma:rho-k-rational}
        Let $\Hy(W) \subset \Hy(V, f)$ be a hyperplane, and let $\rho \in \POO_f(\R)$ denote the
        reflection through $\Hy(W)$. Then $\Hy(W)$ is $k$-rational exactly when $\rho \in \POO_f(k)$.  
\end{lemma}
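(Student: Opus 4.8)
The plan is to prove both implications by relating the reflection $\rho$ through $\Hy(W)$ to an explicit formula in terms of a vector $e$ spanning $W^\perp$, and to exploit the fact (recalled in Sect.~\ref{sec:models}) that $\POO_f$ acts $k$-rationally on $\PV_\C$, so that $\POO_f(k)$ consists precisely of the $k$-rational points of this action. The key algebraic object is the orthogonal reflection $r_e \in \OO_f$ defined by $r_e(x) = x - \tfrac{2 f(x,e)}{f(e)} e$, where $f(\cdot,\cdot)$ is the bilinear form associated with $f$; this is defined over any field containing the coordinates of $e$ and the scalar $f(e)$. Its image $\rho = [r_e]$ in $\POO_f$ is the reflection through $\Hy(W)$, and it is independent of the choice of $e$ spanning the line $W^\perp$ (rescaling $e$ does not change $r_e$).

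First I would prove the easy direction: if $\Hy(W)$ is $k$-rational, then $W = e^\perp$ for some $e \in V$, so all coordinates of $e$ and the value $f(e) \in k^\times$ lie in $k$; hence $r_e \in \OO_f(k)$ and therefore $\rho \in \POO_f(k)$. Second, for the converse, suppose $\rho \in \POO_f(k)$. The point here is that $W$ can be recovered from $\rho$ purely group-theoretically: $\mathbf{P}W$ is the fixed-point locus of $\rho$ acting on $\PV_\C$ that has codimension one (equivalently, $W_\C$ is the $(+1)$-eigenspace of any lift of $\rho$ to $\GL(V_\C)$, namely the eigenspace of dimension $n$). Since $\rho$ is defined over $k$, its fixed-point subscheme in $\PV_\C$ is $k$-closed, hence so is the component $\mathbf{P}W$; by the characterization recalled just before the lemma, this means $\Hy(W)$ is $k$-rational.

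The main obstacle, and the step deserving care, is the eigenspace/fixed-locus argument in the converse: one must check that the hyperplane $\mathbf{P}W$ is genuinely cut out over $k$, not merely over some extension, i.e., that the $(+1)$-eigenline structure is Galois-stable. The cleanest way to handle this is to pick any lift $\tilde\rho \in \GO(V_K,f)$ of $\rho$ for $K/k$ Galois containing all relevant data; since $\rho \in \POO_f(k)$, for each $\sigma \in \Gal(K/k)$ the conjugate $\sigma(\tilde\rho)$ differs from $\tilde\rho$ only by a scalar in $K^\times$, so $\sigma(\tilde\rho)$ and $\tilde\rho$ have the same eigenspaces in $V_K$ up to the common rescaling. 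Because an involution in $\POO_f$ of this type has, on any lift, exactly two eigenvalues with eigenspaces of dimensions $n$ and $1$ (distinguishable by dimension), the $n$-dimensional eigenspace $W_K$ is $\Gal(K/k)$-invariant, hence descends to a $k$-subspace $W \subset V$ with $W^\perp$ a $k$-line; choosing $e \in V$ spanning it shows $\Hy(W)$ is $k$-rational. I would also remark that the non-degeneracy of $f$ on $W$ (equivalently $f(e) \ne 0$) is automatic from $\Hy(W)$ being a genuine hyperplane of $\Hy(V,f)$, so $r_e$ is well-defined throughout.
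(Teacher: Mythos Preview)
Your proof is correct and follows essentially the same approach as the paper: the forward direction via the explicit reflection formula, and the converse via the fixed-point locus of $\rho$ acting on $\PV$. The paper's converse is terser (it simply asserts that the fixed locus of a $k$-rational element under a $k$-rational action is $k$-closed), whereas you spell out the Galois-descent argument on eigenspaces and correctly note that the fixed locus has two components distinguished by dimension; this extra care is warranted, since the fixed locus is $\mathbf{P}W \sqcup \{[e]\}$ rather than $\mathbf{P}W$ alone.
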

\begin{proof}
        Let $B$ be the bilinear form associated with $f$.
        If $W = e^\perp$ for some $e \in V = V_k$, it follows immediately from the 
        formula $\rho(x)= x - \frac{B(x, e)}{B(e, e)} x$  that $\rho$ is $k$-rational.  For the
        converse implication, note that  $\mathbf{P}W$ corresponds the set of fixed points of $\rho$, and thus $\rho \in \POO_f(k)$
        implies that $\mathbf{P}W$ (and thus $\Hy(W)$) is $k$-rational since the action is defined
        over $k$.
\end{proof}

\subsection{Sharp hypersurfaces}%
\label{sec:sharp_hypersurfaces}

Glueings are realized along totally geodesic embedded hypersurfaces of finite volume. To simplify the
discussion, in what follows we call \emph{sharp} such a hypersurface embedded in a hyperbolic
manifold $M$ of finite volume. Note that if $M$ is compact then any totally geodesic hypersurface
embedded in $M$ is sharp. 

\begin{prop} \label{prop:hypersurface-k-rat}
        For $n > 3$,  let $\Gamma \subset \PO(n,1)^\circ$ be an arithmetic lattice over $k$ 
        such that $\Gamma\bs \Hy^n$ contains a sharp hypersurface $N$. Then
        \begin{enumerate}
                \item $\Gamma$ is of the first type, i.e., its ambient group is $\POO^+(V,f)$ for some
                        admissible quadratic space $(V, f)$ over $k$.
                \item Any hyperplane lift of $N$ in $\Hy(V, f)$ is $k$-rational. 
        \end{enumerate}
\end{prop}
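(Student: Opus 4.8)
The plan is to reduce both statements to the $k$-rationality of a single reflection, using Lemma~\ref{lemma:rho-k-rational} as the main tool, and then to use the arithmeticity of $\Gamma$ together with Vinberg's characterization to identify the ambient group. First I would fix the setup: let $\G$ be the ambient group of $\Gamma$, which by Theorem~\ref{thm:Vinberg} is a $k$-form of $\POp$ with $\Gamma \subset \G(k)$, and write $M = \Gamma\bs\Hy^n$. The sharp hypersurface $N \subset M$ is totally geodesic of finite volume, so its preimage in $\Hy^n$ is a union of hyperplanes, and (after choosing a model $\Hy^n \cong \G(\R)\bs\text{(symmetric space)}$, or rather realizing $\Hy^n$ abstractly) the stabilizer in $\Gamma$ of one such hyperplane $\widetilde N_0$ acts on it as a lattice. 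Since $N$ is embedded, the subgroup $\Delta = \Stab_\Gamma(\widetilde N_0)$ preserves the orientation behavior so that the reflection $\rho$ through $\widetilde N_0$ normalizes $\Delta$; the key point is that $\rho$ itself must be arithmetically controlled.

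The crucial step is to show $\rho \in \G(k)$. Here I would argue as follows: the commensurator argument. Because $\Gamma$ is arithmetic, its commensurator $\mathrm{Comm}(\Gamma)$ in $\PO(n,1)$ equals $\G(k)$ (up to the usual identifications — this is Margulis's characterization of arithmeticity, and in Vinberg's language the commensurator is exactly the $k$-points of the ambient group). Now $\rho$ conjugates $\Delta = \Stab_\Gamma(\widetilde N_0)$ into itself, and more is true: since $N$ is sharp (finite volume, embedded), $\rho$ commensurates $\Gamma$ — indeed $\rho \Gamma \rho^{-1} \cap \Gamma$ contains $\Delta$ which is a lattice in the stabilizer of $\widetilde N_0$, and one upgrades this to finite index in $\Gamma$ using that $\widetilde N_0$ has a neighborhood argument / that the reflection can be realized as an isometry of a commensurable manifold (this is where GPS-type reasoning enters). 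Hence $\rho \in \mathrm{Comm}(\Gamma) = \G(k)$.

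Once $\rho \in \G(k)$ is known, part (1) follows: $\G$ is a $k$-form of $\POp$ that contains a reflection (an element whose action on the symmetric space fixes a hyperplane) defined over $k$; such a $k$-involution forces $\G = \POp(V,f)$ for the quadratic space $(V,f)$ obtained as the $(+1)$-eigenspace decomposition of $\rho$ acting on the standard $(n+2)$-dimensional representation — more precisely, the existence of a $k$-rational hyperplane reflection means the natural $(n+1,1)$-form used to build $\POp$ is itself defined over $k$, i.e., $\G$ is of inner type arising from an orthogonal group, which is exactly being of the first type. (For $n$ even there is nothing to prove in (1); the content is for $n$ odd, where a priori $\G$ could be a triality-twisted or otherwise non-orthogonal form, and the presence of a $k$-rational reflection rules this out.) Then part (2) is immediate from Lemma~\ref{lemma:rho-k-rational} applied in the model $\Hy(V,f)$: the hyperplane lift $\widetilde N_0$ corresponds to $\Hy(W)$ with reflection $\rho \in \POO_f(k)$, hence $W = e^\perp$ for some $e \in V$, i.e., $\widetilde N_0$ is $k$-rational; any other hyperplane lift is a $\Gamma$-translate, and $\Gamma \subset \G(k)$ acts $k$-rationally, so all lifts are $k$-rational.

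The main obstacle I expect is the step ``$\rho$ commensurates $\Gamma$'', i.e., justifying rigorously that the reflection through a sharp hypersurface lies in the commensurator. The subtlety is that $\rho$ a priori only normalizes the stabilizer $\Delta$ of one hyperplane, which is a lattice in a codimension-one subgroup, not in all of $\PO(n,1)$; passing from this to finite index in $\Gamma$ requires either the standard fact that the normalizer of the cusp/hypersurface data generates enough of $\Gamma$, or a direct construction: cut $M$ along $N$ and reflect one side, producing a manifold commensurable with $M$ on which $\rho$ is visibly an isometry, then invoke Mostow rigidity / the commensurability invariance. I would want to cite the relevant statement from the arithmetic-pieces literature (Sect.~\ref{sec:arithm-pieces}) rather than reprove it, but the logical heart of the proposition is precisely this commensuration claim feeding into Margulis's $\mathrm{Comm}(\Gamma) = \G(k)$.
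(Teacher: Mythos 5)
Your approach differs from the paper's in both parts, and the key step in (2) has a genuine gap.

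For part (1) the paper simply cites a known result (Meyer, \emph{Sect.~10}); it does not attempt to re-derive it. Your proposed derivation of (1) from the $k$-rationality of $\rho$ is circular as written: you work inside the model $\Hy(V,f)$ and speak of ``the reflection through a hyperplane'' as a $k$-involution of $\G$, but producing an admissible $k$-quadratic space $(V,f)$ with $\G \cong \POp(V,f)$ is exactly what (1) asserts. In a form of type~(II) or~(III) there is no $k$-rational $(n+2)$-dimensional representation on which to take an eigenspace decomposition, so the step ``a $k$-involution forces orthogonal type'' needs the very structure it is supposed to establish.

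For part (2) the gap is in the commensuration step. The inclusion $\Lambda \subset \rho\Gamma\rho^{-1}\cap\Gamma$ gives nothing close to finite index, since $\Lambda$ has infinite index in $\Gamma$ (it stabilizes a codimension-one hyperplane). The proposed repair --- cut $M$ along $N$, reflect one side, and obtain ``a manifold commensurable with $M$'' --- is false in general: the Gromov--Piateski-Shapiro construction shows precisely that such cut-and-paste operations on arithmetic manifolds typically produce \emph{non}-arithmetic, hence incommensurable, manifolds. So ``$\rho\in\mathrm{Comm}(\Gamma)$'' is not justified, and the Margulis characterization cannot be applied.

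The paper's proof of (2) avoids commensurators entirely. Once (1) is in hand, write $N = \Lambda\bs\Hy(W)$ with $\Lambda = \Stab_\Gamma(\Hy(W))$ and $\Gamma\subset\POO_f(k)$. Since $N$ is sharp, $\Lambda$ is a lattice in, hence Zariski-dense in, the isometry group of $\Hy(W)$. Therefore the centralizer $\ZZ$ of $\Lambda$ in $\POO_f$ coincides with the centralizer of $\PO_n(\C)$ in $\PO_{n+1}(\C)$, namely $\{1,\rho\}$. Because $\Lambda\subset\POO_f(k)$, the centralizer $\ZZ$ is a $k$-subgroup, of order two, and the non-trivial element of a $k$-group of order two is Galois-invariant, hence a $k$-point. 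Thus $\rho\in\POO_f(k)$, and Lemma~\ref{lemma:rho-k-rational} concludes. You correctly identified that the heart of the matter is $\rho\in\G(k)$; the lever that actually works is the centralizer of $\Lambda$, not the commensurator of $\Gamma$, and it requires only Zariski-density rather than arithmeticity.
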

\begin{proof}
        The first statement is well known; see for instance \cite[Sect.~10]{Meyer17}.
        For (2) we consider $N \subset \Gamma\bs\Hy(V,f)$, with $\Gamma
        \subset \POO_f(k)$. Let $\Hy(W) \subset \Hy(V,f)$ be a hyperplane lift of $N$, and
        let $\rho \in \POO_f(\R)$ be the associated reflection.  Let $\Lambda$ be the stabilizer of
        $\Hy(W)$ in $\Gamma$, so that $N = \Lambda\bs\Hy(W)$. The latter being sharp, we have that
        $\Lambda$ is Zariski-dense in the group of isometries of $\Hy(W)$. It follows that the
        centralizer $\ZZ$ of $\Lambda$ in $\POO_f$ consists in two elements: $1$ and $\rho$ (since 
        this holds over $\C$ for the centralizer of $\PO_n(\C)$ in $\PO_{n+1}(\C)$). But in a $k$-group of order 2, the nontrivial
        element must be a $k$-point (for it is easily seen to be invariant under the
        Galois group). That is, $\rho \in \POO_f(k)$, and the result follows from
        Lemma~\ref{lemma:rho-k-rational}. 
\end{proof}

\begin{cor} \label{cor:N-over-k}
        Let $M$ be an arithmetic hyperbolic $n$-manifold defined over $k$, with $n>3$. Then  
        any sharp hypersurface in $M$ is arithmetic of the first type, and defined over $k$.
\end{cor}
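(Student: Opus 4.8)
The plan is to reduce Corollary~\ref{cor:N-over-k} to Proposition~\ref{prop:hypersurface-k-rat} together with the already-quoted fact that trace fields of arithmetic lattices coincide with their fields of definition (Remark~\ref{rmk:field-of-def}). Write $M = \Gamma \bs \Hy^n$ with $\Gamma \subset \PO(n,1)^\circ$ arithmetic and defined over $k$, and let $N \subset M$ be a sharp hypersurface. Since $M$ is arithmetic, so is $\Gamma$, and by Proposition~\ref{prop:hypersurface-k-rat}(1) its ambient group has the form $\POO^+(V,f)$ for an admissible quadratic space $(V,f)$ over $k$; fix the resulting identification $M = \Gamma \bs \Hy(V,f)$ with $\Gamma \subset \POO_f(k)$.

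Next I would extract the lattice uniformizing $N$. Choose a hyperplane lift $\Hy(W) \subset \Hy(V,f)$ of $N$, which by Proposition~\ref{prop:hypersurface-k-rat}(2) is $k$-rational, i.e.\ $W = e^\perp$ for some $e \in V$; let $\Lambda \subset \Gamma$ be the stabilizer of $\Hy(W)$, so that $N = \Lambda \bs \Hy(W)$. The point is that $\Hy(W)$ is a copy of $\Hy^{n-1}$ with isometry group the group of isometries of $(W, f|_W)$, and $(W, f|_W)$ is an admissible quadratic space over $k$: it is defined over $k$ because $W$ is a $k$-subspace, it has the right signature $(n-1,1)$ because $e$ has positive norm so $f|_W$ has the same negative index as $f$, and it satisfies the compactness-at-other-infinite-places condition because $f$ does (restricting an anisotropic-at-$v$ form to a subspace keeps it anisotropic, hence definite, at $v$). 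Thus $\POO^+(W, f|_W)$ is an admissible $k$-group, and $\Lambda \subset \POO^+(W, f|_W)(k)$: indeed $\Lambda$ fixes $\Hy(W)$ pointwise-setwise and acts on it, and each element of $\Lambda \subset \POO_f(k)$ preserving $W$ restricts to a $k$-point of $\POO(W, f|_W)$, landing in the identity component since $\Lambda \subset \PO(n-1,1)^\circ$.

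It remains to check that $\Lambda$ is \emph{arithmetic} of the first type, and defined over $k$. For arithmeticity, since $\Gamma$ is commensurable with $\POO_f(\O_k)$, the subgroup $\Lambda$ is commensurable with $\Stab_{\POO_f(\O_k)}(W)$, which is commensurable with the integral points $\POO^+(W, f|_W)(\O_k)$ (one can realize $W$ over $\O_k$ by scaling $e$, and the stabilizer of an $\O_k$-lattice subspace in the integral orthogonal group is, up to finite index, the integral orthogonal group of the restricted form); here one uses that $N$ is sharp so that $\Lambda$ is itself a lattice in $\PO(n-1,1)$ rather than a thinner subgroup. Hence $\Lambda$ is arithmetic with ambient group $\POO^+(W, f|_W)$, which is of the first type by construction. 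Finally, being of the first type, its field of definition equals its trace field by Remark~\ref{rmk:field-of-def}, and since the ambient group $\POO^+(W, f|_W)$ is already exhibited as defined over $k$, the trace field of $\Lambda$ is contained in $k$; conversely it must equal $k$ because $(W, f|_W)$ being admissible forces $k$ to be the field generated by the relevant traces (alternatively, a strictly smaller field of definition would contradict minimality of the Vinberg ambient group).

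The main obstacle I anticipate is the arithmeticity bookkeeping in the last paragraph: passing cleanly from ``$\Gamma$ commensurable with $\POO_f(\O_k)$'' to ``$\Lambda$ commensurable with $\POO^+(W,f|_W)(\O_k)$'' requires choosing compatible integral structures on $V$ and $W$ and invoking that the stabilizer of a primitive sublattice inside an arithmetic group is arithmetic in the corresponding smaller group — a standard fact, but one that needs sharpness of $N$ (so that $\Lambda$ is a lattice, not an infinite-covolume subgroup) and a little care with the passage between $\OO$ and $\POO$. Everything else — the signature and local compactness conditions on $f|_W$, the $k$-rationality of $W$, and the trace-field-equals-field-of-definition step — is either immediate or quoted from the results above.
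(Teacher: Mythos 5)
Your proof is correct and follows essentially the same skeleton as the paper's: write $M = \Gamma\bs\Hy(V,f)$ using Prop.~\ref{prop:hypersurface-k-rat}(1), take a $k$-rational hyperplane lift $\Hy(W)$ of $N$ via Prop.~\ref{prop:hypersurface-k-rat}(2), restrict $f$ to $W$ to get an admissible $k$-form, and identify $\POO^+(W,f|_W)$ as the ambient group. The one genuine divergence is where the arithmeticity of $\Lambda$ comes from. The paper simply cites Meyer~\cite[Theorem~3.2]{Meyer17} for ``$\Lambda$ is arithmetic,'' and then notes that $\POO^+(W,f_0)$ is an admissible $k$-group whose $k$-points contain $\Lambda$, so it must be the ambient group. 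You instead re-derive arithmeticity by hand, passing from the commensurability of $\Gamma$ with $\POO_f(\O_k)$ to that of $\Lambda$ with $\POO^+(W,f|_W)(\O_k)$ via the standard fact that the stabilizer of a $k$-rational subspace in an arithmetic group is arithmetic in the corresponding $k$-subgroup. That works and is more self-contained, but it imports the integral-structure bookkeeping you flag yourself; citing Meyer sidesteps it. One small point worth making explicit (left implicit in both accounts): to rule out the trace field of $\Lambda$ being a proper subfield $K\subsetneq k$, note that $k$ totally real implies every archimedean place of $K$ has $[k:K]$ places of $k$ above it, so an admissible $K$-form of $\POO^+(W,f|_W)$ would force $\POO^+(W,f|_W)(k\otimes_\Q\R)$ to have at least two noncompact factors, contradicting admissibility over $k$. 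Also a minor slip: in your last paragraph you invoke ``being of the first type'' as the hypothesis for Remark~\ref{rmk:field-of-def}, but the remark applies to all arithmetic lattices; the first-type hypothesis is not what is needed there.
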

\begin{proof}
        Let $M = \Gamma\bs\Hy(V, f)$, and $N \subset M$ be a sharp hypersurface, i.e., $N =
        \Lambda\bs\Hy(W)$ for $\Lambda$ the stabilizer in $\Gamma$ of some $k$-rational hyperplane $\Hy(W)$.
        The lattice $\Lambda$ is known to be arithmetic; see \cite[Theorem~3.2]{Meyer17}.
        Let $f_0$ be the restriction of $f$ to $W$. Then $\POp(W,f_0)$ is an admissible $k$-group (for
        $\PO(n-1, 1)^\circ$), whose $k$-points contain the arithmetic subgroup $\Lambda$.
        We conclude that $\POp(W, f_0)$ is the ambient group of $\Lambda$.
\end{proof}

\subsection{Extending similitudes}
\label{sec:extend-similitudes}

For a similitude $\phi: (V_0, f_0) \to (V_1, f_1)$ between quadratic spaces, we denote by
$\conj(\phi): \POO(V_0,f_0) \to \POO(V_1,f_1)$ the map $g \mapsto \phi \circ g \circ
\phi^{-1}$.
\begin{lemma} \label{lem:extend-similitudes}
  Let $(V_0,f_0)$ and $(V_1,f_1)$ be two nondegenerate quadratic spaces over a field $k$, each of
  them containing a quadratic subspace $W_i \subset V_i$ ($i=0,1$) of codimension $1$. Assume that
  there exists a similitude $\varphi: W_0 \to W_1$. Then there exists an extension $F/k$ which is at most
  quadratic such that $\varphi$ extends to a similitude $\phi: V_0 \otimes_k F \to \V_1 \otimes_k F$. In
  particular, $\conj(\phi): \POO(V_0,f_0) \to \POO(V_1,f_1)$ is defined over $F$.
\end{lemma}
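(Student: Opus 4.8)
The plan is to build the extension $\phi$ in two stages: first extend $\varphi$ to map a complement of $W_0$ into a complement of $W_1$ with the right scaling, then check that the resulting linear map is a similitude. Write $f_i|_{W_i} = g_i$, so $\varphi: (W_0, g_0) \to (W_1, g_1)$ is a similitude with some multiplier $\mu \in k^\times$, i.e. $g_1(\varphi w) = \mu\, g_0(w)$. Since $W_i$ has codimension $1$ in the nondegenerate space $V_i$, we may pick $e_i \in W_i^{\perp}$ with $f_i(e_i) = c_i \in k^\times$ (nondegeneracy of $f_i$ forces $W_i^\perp$ to be a nondegenerate line), and $V_i = W_i \perp k e_i$. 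The obvious attempt is to send $e_0 \mapsto \lambda e_1$ for a scalar $\lambda$ chosen so that the multiplier on the line matches the multiplier $\mu$ on $W_0$: we need $f_1(\lambda e_1) = \mu f_0(e_0)$, i.e. $\lambda^2 c_1 = \mu c_0$, so $\lambda^2 = \mu c_0 / c_1$. This has a solution in $k$ precisely when $\mu c_0 c_1 \in (k^\times)^2$; in general it has a solution in the at-most-quadratic extension $F = k\bigl(\sqrt{\mu c_0 c_1}\bigr)$.

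With $F$ so defined, set $\phi := \varphi \otimes_k F$ on $W_0 \otimes_k F$ and $\phi(e_0) := \lambda e_1$ (choosing $\lambda \in F$ a square root of $\mu c_0/c_1$), extended $F$-linearly to $V_0 \otimes_k F = (W_0 \otimes_k F) \perp F e_0$. I would then verify $\phi$ is a similitude with multiplier $\mu$: it is an $F$-linear isomorphism $V_0 \otimes_k F \to V_1 \otimes_k F$ (it sends the orthogonal decomposition to the orthogonal decomposition, bijectively on each summand), and on $W_0 \otimes_k F$ it scales $f_0$ by $\mu$ by hypothesis, while on $Fe_0$ it scales $f_0$ by $\mu$ by the choice of $\lambda$; since the decompositions on both sides are orthogonal, $f_1(\phi(x)) = \mu f_0(x)$ for all $x$. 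Hence $\phi$ is a similitude over $F$. The final clause --- that $\conj(\phi): \POO(V_0, f_0) \to \POO(V_1, f_1)$ is defined over $F$ --- is then immediate, since $\conj(\phi)$ is the map $g \mapsto \phi g \phi^{-1}$ and $\phi$ is an $F$-linear (similitude) isomorphism, so conjugation by it is an $F$-morphism of algebraic groups, using the description of $\POO_f$ via similitudes recalled in Section~\ref{sec:models}.

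The only genuine subtlety --- and the step I would be most careful about --- is the nondegeneracy bookkeeping: one must know that $W_i^\perp$ is a nondegenerate line so that $c_i \neq 0$, which uses that $f_i$ is nondegenerate and $W_i$ has codimension exactly $1$; and one should note the degenerate-looking edge case where $W_i$ itself might be degenerate is harmless here because we only use that $g_1(\varphi w) = \mu g_0(w)$ as quadratic forms, not that $g_i$ is nondegenerate. Everything else is a routine linear-algebra verification. It is worth recording that $F$ depends only on the class of $\mu c_0 c_1$ modulo squares, so the extension is trivial exactly when that class is a square --- in particular $F = k$ when $\varphi$ is an isometry and $c_0 = c_1$, or more generally whenever $\mu c_0 c_1 \in (k^\times)^2$.
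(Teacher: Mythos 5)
Your proposal is correct and takes essentially the same route as the paper: decompose $V_i = W_i \perp W_i^\perp$, observe that the one-dimensional complements $W_i^\perp$ are similar, compute the scalar needed to match multipliers, and adjoin a single square root. Your $F = k(\sqrt{\mu c_0 c_1})$ coincides (as a field) with the paper's $k(\sqrt{\lambda/\alpha})$ after unwinding their notation, and the verification you spell out is the content the paper leaves implicit.
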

\begin{proof}
If $V = (V,f)$ is a quadratic space, we write $\lambda V$ for the quadratic space $(V, \lambda f)$.
For $i=0,1,$ we have a decomposition $V_i = W_i \perp S_i$ where $S_i = W_i^\perp$ is of dimension 1.
Thus $S_0$ and $S_1$ are similar; let $\alpha \in k$ be such that $S_0$ is isomorphic to $\alpha S_1$.
Further let $\lambda \in k$ be such that $\varphi: W_0 \to \lambda W_1$ is an isomorphism.
Then if $F$ denotes the field extension $F = k(\sqrt{\lambda/\alpha})$, we have 
$S_0 \otimes_k F \cong \lambda S_1 \otimes_k F$. This allows to extend $\varphi$ to 
an isomorphism  $V_0 \otimes_k F \cong \lambda V_1 \otimes_k F$, as desired.
\end{proof}

\subsection{Hyperbolic pieces}
\label{sec:hyper-pieces}

By a {\em hyperbolic piece} (of dimension $n$) we mean a complete orientable hyperbolic $n$-manifold of finite
volume with a (possibly empty) boundary consisting in finitely many sharp hypersurfaces.
We say that a piece is {\em singular} if it has nonempty boundary, and {\em regular} otherwise (in which case it
is just a hyperbolic manifold in the sense of Sect.~\ref{sec:hyp-isom}).  A singular piece $M$ can
always be embedded in a regular one of same dimension: it suffices to consider the ``double"
$\widehat{M}$ of $M$ obtained by glueing together two copies of $M$ along each boundary
component ($\widehat{M}$ is complete according to \cite[2.10.B]{GroPS87}).

Let $M$ be a singular hyperbolic piece. 
Its universal cover $\tM$ is isometric to an infinite intersection of half-spaces in $\Hy^n$; see
\cite[Sect.~3.5.1]{martelli16}.
The fundamental group $\pi_1 M$ of $M$ identifies with a discrete subgroup $\Gamma \subset \POp(\R)$ stabilizing $\tM$ and such that $M \cong \Gamma \bs \tM$.
It is clear that any two choices of universal covers for $M$ in $\Hy^n$ are conjugate by an isometry,
and thus, up to conjugacy, the discrete subgroup $\Gamma$ is uniquely determined by $M$. The {\em trace field} of $M$ is
defined as the trace field of $\Gamma$ (see Sect.~\ref{sec:trace-ambient}). Furthermore, by \cite[1.7.B]{GroPS87} we have that $\Gamma$
is Zariski-dense in $\POp(\R)$, and using Theorem \ref{thm:Vinberg} there is 
therefore an intrinsic notion of ambient group for $\Gamma$.

\subsection{Arithmetic pieces}
\label{sec:arithm-pieces}
We define an \emph{arithmetic piece} $M$ (of dimension $n$) as a singular hyperbolic piece of dimension $n$ that embeds into an arithmetic
hyperbolic manifold $M_0= \Gamma_0\bs\Hy^n$.
Thus $M = \Gamma\bs\widetilde{M}$, where $\widetilde{M} \subset \Hy^n$ and
$\Gamma = \Stab_{\Gamma_0}(\widetilde{M})$ is a subgroup of infinite index in $\Gamma_0$ (see \cite[2.10.A]{GroPS87}).

\begin{lemma} \label{lemma:same-trace-field}
        Let $M \subset M_0$ be an arithmetic piece as above, of dimension $n > 3$. Then $M$ and $M_0$ have the same trace
        field and the same ambient group.
\end{lemma}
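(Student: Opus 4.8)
The plan is to show that the inclusion of discrete groups $\Gamma \subset \Gamma_0$ induces equalities of trace fields and ambient groups by exploiting Zariski-density on both sides. First I would recall the two facts we are handed: $\Gamma = \Stab_{\Gamma_0}(\widetilde M)$ is Zariski-dense in $\POp(\R)$ by \cite[1.7.B]{GroPS87}, and $\Gamma_0$ is Zariski-dense in $\POp(\R)$ by Borel's density theorem. Since $\Gamma \subset \Gamma_0$, Theorem~\ref{thm:Vinberg} applies to both groups: each has a well-defined trace field and a well-defined ambient group (a $k$-form of $\POp$, uniquely determined by the group up to $k$-isomorphism).

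Let $k_0$ be the trace field of $\Gamma_0$ and $k$ the trace field of $\Gamma$. The inclusion $\Gamma \subset \Gamma_0$ immediately gives $k \subset k_0$, since the trace field is generated by the values $\mathrm{tr}(\Ad\gamma)$ and every such value for $\gamma \in \Gamma$ already occurs for $\Gamma_0$. For the reverse inclusion the key point is that $\Gamma$ is finite-index-free but still Zariski-dense. I would argue as follows: let $\G_0$ be the ambient group of $\Gamma_0$, a $k_0$-form of $\POp$ with $\Gamma_0 \subset \G_0(k_0)$; in particular $\Gamma \subset \G_0(k_0)$, so the ambient group $\G$ of $\Gamma$ — being the unique $k$-form of $\POp$ containing $\Gamma$ — must satisfy, after extending scalars, $\G_{k_0} \cong (\G_0)_{k_0}$ up to the natural identifications, forcing compatibility. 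More directly, I would invoke the characterization underlying Vinberg's theorem: the trace field of a Zariski-dense subgroup $\Gamma$ is the field generated by traces of $\Ad$ on $\Gamma$, and since $\Gamma$ is Zariski-dense in $\POp$, the $k_0$-algebra generated by $\Ad(\Gamma)$ inside $\End(\g)$ is all of $\End(\g) \otimes k_0$ — hence its traces already generate $k_0$, giving $k_0 \subset k$. This uses that for $n > 3$ the adjoint representation of $\POp$ is absolutely irreducible with $\Ad(\POp)$ spanning $\End(\g)$, so a Zariski-dense subgroup spans the full matrix algebra and its trace field cannot be a proper subfield. Thus $k = k_0$.

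Once the trace fields agree, the ambient groups agree too: $\G$ and $\G_0$ are both $k$-forms of $\POp$, and $\Gamma \subset \G_0(k)$ with $\Gamma$ Zariski-dense; by the uniqueness clause in Theorem~\ref{thm:Vinberg} (the ambient group is unique up to $k$-isomorphism), $\G$ must be $k$-isomorphic to $\G_0$. I would phrase this as: $\G_0$ is a $k$-form of $\POp$ whose group of $k$-points contains the Zariski-dense subgroup $\Gamma$, hence $\G_0$ \emph{is} the ambient group of $\Gamma$, which is $\G$ by definition.

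The main obstacle is the middle step — proving $k_0 \subset k$, i.e., that passing from $\Gamma_0$ to the infinite-index subgroup $\Gamma$ does not shrink the trace field. The inclusion $k \subset k_0$ is trivial; the content is the reverse. The cleanest route is the Zariski-density of $\Gamma$ (given to us via \cite[1.7.B]{GroPS87}) combined with the fact that for $n>3$ the adjoint action is absolutely irreducible and $\Q$-linearly spans $\End(\g)$, so that the $\Q$-span of $\{\Ad\gamma : \gamma \in \Gamma\}$ is a form of $\End(\g)$ over the trace field $k$ of $\Gamma$, which must then contain all traces of $\Gamma_0$. One must be slightly careful that the statement is specifically for $n>3$, since this is exactly where the adjoint trace field is the right invariant (cf.\ Remark~\ref{rmk:dim-3-trace-field}); for $n=3$ the complex structure intervenes and the argument would need modification. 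I would make sure the irreducibility/spanning input is cited (e.g.\ via \cite[Prop.~1.6.5]{OniVinb-II} and the discussion around Theorem~\ref{thm:Vinberg}) rather than reproved.
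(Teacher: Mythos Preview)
Your argument for the reverse inclusion $k_0 \subset k$ has a genuine gap. The step ``since $\Gamma$ is Zariski-dense, the $k_0$-algebra generated by $\Ad(\Gamma)$ is all of $\End(\g)\otimes k_0$, hence its traces already generate $k_0$'' does not follow: the traces of elements of $\Ad(\Gamma)$ generate $k$ by definition, and the fact that $\Ad(\Gamma)$ spans $\End(\g)$ over $k_0$ (or over $\R$) places no constraint on which subfield of $k_0$ those traces lie in. Nor does the $\Q$-algebra generated by $\Ad(\Gamma)$ contain $\Ad(\Gamma_0)$. In fact the paper's main construction furnishes counterexamples to your implicit claim: in the situation of Theorem~\ref{thm:k-G-glueings}, each arithmetic piece has Zariski-dense fundamental group (by \cite[1.7.B]{GroPS87}) with trace field contained in $k$, yet sits inside the glued lattice whose trace field is $K \supsetneq k$. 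Zariski-density is a condition over $\R$ and does not control the arithmetic of traces; this is precisely why the trace field in Theorem~\ref{thm:Vinberg} is only a \emph{commensurability} invariant, not an invariant of arbitrary Zariski-dense inclusions.

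Your first line of attack---deducing $\G_{k_0} \cong \G_0$ from uniqueness of the ambient group over $k_0$---is correct and can be completed, but not via the spanning argument you pivot to. The missing input is that $\Gamma_0$ is \emph{arithmetic}, so $\G_0$ is admissible over $k_0$; an admissible $k_0$-group cannot descend to a proper totally real subfield $k \subsetneq k_0$, since every real place of $k_0$ above a fixed real place of $k$ yields the same real form of $\G$, forcing the unique noncompact factor to be repeated $[k_0:k]>1$ times. The paper, however, takes a different and more geometric route: since $M$ is singular it has a sharp hypersurface $N$ in its boundary, which is also sharp in $M_0$; Corollary~\ref{cor:N-over-k} then gives that $N$ is arithmetic with field of definition $k_0$, and since $\pi_1(N) \subset \Gamma$ has trace field $k_0$ as a subgroup of $\PO(n,1)$, one obtains $k_0 \subset k$ directly.
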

\begin{proof} 
        Let $k$ be the field of definition of $M_0$, and $K$ the trace field of $M$.
        Clearly we have $K \subset k$.  Now $M$ being singular, it contains a sharp hypersurface $N$ in
        its boundary, whose field of definition must be $k$ by  Corollary~\ref{cor:N-over-k}.
        Using Remark \ref{rmk:field-of-def} it is easily checked that the field of definition of $N$
        (seen as a $(n-1)$-manifold) corresponds to the trace field of $\pi_1(N)$, seen as a subgroup of $\PO(n,1)$.
        We conclude that $k \subset K$, and thus $M$ and $M_0$ (resp.\ their fundamental groups
        $\Gamma$ and $\Gamma_0$) have the same trace field. Let $\G$ be the ambient group of
        $\Gamma_0$. From the inclusion $\Gamma \subset \Gamma_0$ we have $\Gamma \subset \G(k)$, and
        it follows that $\G$ is the ambient group of $\Gamma$ as well (by uniqueness).
\end{proof}

We will say that an arithmetic piece $M$ (resp.\ its fundamental
group) is {\em defined over $k$} if its trace field is $k$; this extends the terminology used for regular pieces (see
Remark \ref{rmk:field-of-def}). 

\begin{cor}
        \label{cor:glue-same-k}
        For $i = 0, 1$ let $M_i$ be an arithmetic piece containing a hypersurface $N_i$ in its
        boundary. If $N_0$ is isometric to $N_1$ then $M_0$ and $M_1$ are defined over the same
        field. 
\end{cor}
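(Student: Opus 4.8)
The plan is to deduce Corollary~\ref{cor:glue-same-k} directly from Lemma~\ref{lemma:same-trace-field} together with Corollary~\ref{cor:N-over-k}. First I would set up notation: for $i=0,1$ write $M_i = \Gamma_i \bs \widetilde{M_i}$ as an arithmetic piece inside an arithmetic manifold $M_i \subset M_{0,i} = \Gamma_{0,i}\bs\Hy^n$ defined over a number field $k_i$. By Lemma~\ref{lemma:same-trace-field}, each $M_i$ has trace field $k_i$ (equal to that of the ambient arithmetic manifold $M_{0,i}$), so the claim to prove is simply $k_0 = k_1$.

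The key idea is that the boundary hypersurface $N_i \subset M_i$ is a sharp hypersurface, and by Corollary~\ref{cor:N-over-k} (applied inside the arithmetic manifold $M_{0,i}$, in which $N_i$ still sits as a sharp hypersurface) it is itself an arithmetic hyperbolic $(n-1)$-manifold defined over $k_i$. As observed in the proof of Lemma~\ref{lemma:same-trace-field}, the field of definition of $N_i$ (as an $(n-1)$-manifold) coincides with the trace field of $\pi_1(N_i)$; so the trace field of $N_i$ is $k_i$. Now the hypothesis is that $N_0$ is isometric to $N_1$ as hyperbolic manifolds, and the trace field is an isometry invariant (it depends only on the discrete subgroup $\pi_1(N_i) \subset \PO(n-1,1)$ up to conjugacy, by part (1) of Theorem~\ref{thm:Vinberg}). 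Hence $k_0 = k_1$, which is exactly the assertion.

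I would then record the one-line conclusion: combining these, $M_0$ and $M_1$ both have trace field equal to the common trace field of $N_0 \cong N_1$, so they are defined over the same field in the sense of Sect.~\ref{sec:arithm-pieces}.

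The only delicate point — and it is minor — is making sure that Corollary~\ref{cor:N-over-k} genuinely applies: one must check that $n-1 > 3$ so that the relevant results (Vinberg's theorem, Remark~\ref{rmk:field-of-def}, Corollary~\ref{cor:N-over-k}) are available for $N_i$. Since the glueing constructions of interest live in dimension $n > 3$ and $N_i$ has dimension $n-1 \geq 3$, one should presumably assume $n > 4$ here, or note that the case $n-1 = 3$ needs the invariant-trace-field version of the argument (Remark~\ref{rmk:dim-3-trace-field}); in any case the statement is implicitly in the regime $n>3$ inherited from Lemma~\ref{lemma:same-trace-field} and Prop.~\ref{prop:hypersurface-k-rat}, so I would simply invoke that hypothesis. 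Everything else is formal bookkeeping, so I do not expect any real obstacle.
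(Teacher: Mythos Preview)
Your argument is correct and follows essentially the same route as the paper's proof, which simply says that isometric hypersurfaces have the same field of definition and then invokes Corollary~\ref{cor:N-over-k} and Lemma~\ref{lemma:same-trace-field}. Your worry about the case $n-1=3$ is unnecessary: Corollary~\ref{cor:N-over-k} only needs the ambient dimension $n>3$, and its conclusion (that $N_i$ is arithmetic with ambient $k_i$-group $\POp(W,f_0)$) gives ``defined over $k_i$'' in the sense of arithmetic field of definition, which is invariant under isometry without any appeal to the adjoint trace field of $\pi_1(N_i)$.
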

\begin{proof}
        Being isometric we have that $N_0$ and $N_1$ have the same field of definition, and the result 
        follows immediately from Corollary \ref{cor:N-over-k} and Lemma~\ref{lemma:same-trace-field}.
\end{proof}

\subsection{Glueings}
\label{sec:glueings-proofs}

Let $M$ be a hyperbolic manifold obtained by glueing arithmetic pieces.  
By definition this means that there exists a sequence 
$$M_1, \dots, M_r = M,$$
where $M_1$ is an arithmetic piece, and for $i \ge 1$ the piece $M_{i+1}$ is obtained from $M_i$ by one 
of the two following operations:
\begin{enumerate}[(I)]
\item glueing an arithmetic piece $M_0$ to $M_i$ using an isometry $\phi$ between two
  hypersurfaces in their respective boundary, i.e.,  
  \[
    M_{i+1} = M_0 \cup_{\phi} M_i.
  \]
\item ``closing up'' two hypersurfaces in the boundary of $M_i$, i.e., $M_{i+1} = M_i / \left\{x = \phi(x)\right\}
        $ where $\phi$ is an isometry between two hypersurfaces in the boundary of $M_i$.
\end{enumerate}
Theorem \ref{thm:k-G-glueings} is a specialization the following.

\begin{theorem}
        \label{thm:trace-group-of-pieces}
        Let $M$ be a hyperbolic piece of dimension $n>3$ obtained by glueing a finite number 
        of arithmetic pieces $M_i$ defined over $k$.
  \begin{enumerate}
  \item The trace field of $M$ is a multiquadratic extension $K/k$.
  \item The ambient group of $M$ is $\G_K$, where $\G$ is the ambient group of any of the piece
          $M_i$.
  \end{enumerate}
\end{theorem}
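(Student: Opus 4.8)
The plan is to prove Theorem~\ref{thm:trace-group-of-pieces} by induction on the number $r$ of arithmetic pieces used in the construction, peeling off the last glueing operation. The base case $r=1$ is exactly Lemma~\ref{lemma:same-trace-field}: an arithmetic piece defined over $k$ has trace field $k$ and ambient group $\G$, so $K=k$ (trivially multiquadratic) and the statement holds. For the inductive step, suppose $M$ is built from $M_1,\dots,M_r$ and that the intermediate piece $M_{r-1}$ already satisfies the conclusion: its trace field is some multiquadratic extension $K'/k$ and its ambient group is $\G_{K'}$. There are two cases according to whether $M=M_r$ is obtained from $M_{r-1}$ by operation (I) (glueing a fresh arithmetic piece $M_0$) or operation (II) (closing up two boundary hypersurfaces of $M_{r-1}$ along an isometry $\phi$).

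First I would set up a common ambient model. By the inductive hypothesis the ambient group of $\pi_1 M_{r-1}$ is $\G_{K'}$, and since $\G$ is admissible we may (after Theorem~\ref{thm:Vinberg} and Proposition~\ref{prop:hypersurface-k-rat}, which forces the first-type property once a sharp hypersurface is present) take $\G = \POp(V,f)$ for an admissible quadratic space $(V,f)$ over $k$; in particular $\pi_1 M_{r-1} \subset \POp_f(K')$ acts on $\Hy(V,f)$, and the hyperplane lifts of the boundary hypersurfaces are $K'$-rational by Proposition~\ref{prop:hypersurface-k-rat}. In case (I), Corollary~\ref{cor:glue-same-k} guarantees $M_0$ is also defined over $k$; by Lemma~\ref{lemma:same-trace-field} its ambient group is likewise $\G$, so after conjugating we may realise $\pi_1 M_0 \subset \POp_f(k)$ on the same $\Hy(V,f)$, with the glued hypersurfaces carried to $K'$-rational (in fact $k$-rational, then $K'$-rational) hyperplanes $\Hy(W_0)$ and $\Hy(W_1)$. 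The glueing isometry $\phi$ identifies the boundary hypersurface $N$ with itself, hence restricts to a similitude $\varphi$ between the restricted quadratic spaces $W_0$ and $W_1$; here is where Lemma~\ref{lem:extend-similitudes} enters: $\varphi$ extends to a similitude $\phi$ of the ambient spaces after an at-most-quadratic extension $F/k$, so $\conj(\phi)$ is defined over $F$. The new group $\pi_1 M_r$ is generated by $\pi_1 M_0$, $\pi_1 M_{r-1}$, and the element implementing $\phi$ (more precisely, by van Kampen the amalgam identifies the two stabilizers of $N$, and the ambient group gets conjugated by $\phi$ across the gluing wall), so $\pi_1 M_r \subset \G(K'F)$ with $K'F$ still multiquadratic over $k$ since both $K'$ and $F$ are. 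By the uniqueness clause of Theorem~\ref{thm:Vinberg} the ambient group of $M_r$ is $\G_K$ for $K$ the trace field, and $k \subset K \subset K'F$ forces $K$ multiquadratic over $k$ as well (a subextension of a multiquadratic extension is multiquadratic — this elementary field-theory fact I would state and cite or prove in one line). Case (II) is handled identically, with only $M_{r-1}$ (no new $M_0$) in play.

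The main obstacle I expect is the bookkeeping around exactly which group the fundamental group of the glued manifold lands in, and in particular showing the ambient group does not grow beyond $\G$ itself (only the field grows). Concretely: after gluing, $\pi_1 M_r$ contains $\pi_1 M_{r-1}$ with ambient group $\G_{K'}$, so the ambient group of $M_r$ is a $K$-form of $\POp$ containing $\G_{K'} $ after base change — one must argue this $K$-form is forced to be $\G_K$ and not some twist, which follows because $\G_K$ already contains $\pi_1 M_r$ (via the $\conj(\phi)$ computation placing everything inside $\G(K)$ for $K = K'F$ or a subfield) and the ambient group is the unique minimal such form. The subtlety is making sure the element realizing the glueing isometry genuinely lies in $\G(K'F)$ and not merely in the similitude group of some a priori different form; this is exactly what Lemma~\ref{lem:extend-similitudes} buys us, since $\conj(\phi)$ being $F$-rational means it normalizes $\POp_f$ and lands in $\POp_f(F) \cdot \PO_f$, keeping us inside $\G$ after base change to $K'F$. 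I would also need to verify that the intersection $\Gamma = \Stab$ structure (amalgamated product / HNN extension along $\pi_1 N$) is faithfully realised inside $\G(K'F)$, which is automatic once all generators are, since Zariski-density of $\pi_1 M_r$ in $\POp(\R)$ — granted by \cite[1.7.B]{GroPS87} — lets us invoke Theorem~\ref{thm:Vinberg} to pin down the trace field and ambient group unambiguously.
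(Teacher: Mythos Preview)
Your strategy matches the paper's---induct along the glueing sequence, invoking Lemma~\ref{lem:extend-similitudes} at each step to bound the growth of the trace field by a quadratic extension---but two points in the execution are not right.

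In case (I) you assert, citing Lemma~\ref{lemma:same-trace-field}, that the ambient group of $M_0$ is ``likewise $\G$'' and then place $\pi_1 M_0 \subset \POp_f(k)$ inside the same $\Hy(V,f)$. Lemma~\ref{lemma:same-trace-field} does not give this: it only identifies the ambient group of $M_0$ with that of its enclosing arithmetic manifold, which is $\POp(V_0,f_0)$ for some a priori \emph{different} quadratic $k$-space. That $\POp(V_0,f_0)$ and $\G$ become isomorphic is precisely what Lemma~\ref{lem:extend-similitudes} delivers, and only after passing to $F$. The paper therefore keeps $\Gamma_0 \subset \POp_{f_0}(k)$ in its own model $\Hy(V_0,f_0)$, applies Lemma~\ref{lem:extend-similitudes} to the similitude $W_0\to W_1$ between codimension-one subspaces of the two \emph{different} ambient spaces, and uses the resulting $\phi\colon V_0\otimes_k F \to V\otimes_k F$ to move $\Gamma_0$ into $\G(F)$ via $\conj(\phi)$; your narrative presupposes the outcome of this step before carrying it out. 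Separately, case (II) is not ``handled identically'': there the glued group is an HNN extension $\langle \Gamma_1, g'\rangle$, and the new generator $g'\in\G(\R)$ must itself be shown to lie in $\G(K'F)$. The paper produces a second element $g = \phi\circ\psi^{-1}\in\G(K'F)$ (with $\psi$ an isometry $\Hy(V_0,f_0)\to\Hy(V,f)$ furnished by the inductive hypothesis, since $\POp(V_0,f_0)_{K'}$ is also an ambient group for $\pi_1 M_{r-1}$) that lifts the same boundary isometry, observes that $g$ and $g'$ agree on $\Hy(W_1)$, and concludes $g\in\{g', g'\rho\}$ with $\rho$ the $k$-rational reflection through $\Hy(W_1)$---hence $g'\in\G(K'F)$. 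This short argument is genuinely needed and is missing from your sketch.
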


\begin{proof}
        We proceed inductively. If $M$ is an arithmetic piece over $k$ then (1) and (2) hold trivially.
        Suppose then that $M_1$ is a piece constructed by glueing arithmetic pieces over $k$,
        and  that it respects the conditions (1) and (2) (recurrence assumption).
        We will glue $M_1$ using a sharp hypersurface $N_1 \subset M_1$. Let $K_1$ be the trace field of $M_1$,
        which is multiquadratic over $k$. Let $\G = \POp(V, f)$ be the ambient group of an
        arithmetic piece containing $N_1$, so that $\G_{K_1}$ is the ambient group of $M_1$. Thus
        we may write $M_1 = \Gamma_1\bs \widetilde{M_1}$, for $\widetilde{M_1} \subset \Hy(V, f)$ and
        $\Gamma_1$ a discrete subgroup of $\G(K_1)$ stabilizing $\widetilde{M}$.
       
        Let us first examine the case of the glueing of type (I), i.e., $M$ is obtained by glueing $M_1$ to an
        arithmetic piece $M_0$ along a hypersurface $N_0 \subset M_0$ isometric to $N_1$.
        Abstractly we have $\pi_1(M) = \left<\pi_1(M_0), \pi_1(M_1)\right>$.  
        By Corollary~\ref{cor:glue-same-k} we have that $M_0$ is defined over $k$. Moreover, it is
        of the first type (Prop.~\ref{prop:hypersurface-k-rat}), and we can write
        $M_0 = \Gamma_0\bs \widetilde{M_0}$, where $\widetilde{M_0} \subset \Hy(V_0, f_0)$
        for some admissible quadratic space $(V_0, f_0)$ over $k$, and $\Gamma_0 \subset
        \POp_{f_0}(k)$.
        We fix a hyperplane lift $\Hy(W_0)$  of $N_0$  in $\Hy(V_0, f_0)$, which is necessarily
        $k$-rational by Prop.~\ref{prop:hypersurface-k-rat}.
        We fix as well a hyperplane lift $\Hy(W_1) \subset \Hy(V, f)$ of $N_1$; it is $k$-rational
        for the same reason.  Since $N_1$ and $N_0$ are isometric, the quadratic spaces $W_1$ and $W_0$ must be similar
        over $k$ (see~\cite[2.6]{GroPS87}). More precisely, there exists a similitude $\varphi: W_0 \to W_1$
        whose extension to $W_0 \otimes_k \R$ lifts the glueing isometry between $N_0$ and $N_1$.
        By Lemma~\ref{lem:extend-similitudes} there exists an extension $F/k$, at most quadratic, over which
        $\varphi$ extends to a similitude $\phi: V_0 \otimes_k F \to V \otimes_k F$ (and note that
        the scaling factor can be chosen positive here, so that $F$ is real). In particular,
        $\conj(\phi)(\Gamma_0) \subset \G(F)$. The map $\phi$ induces
        an isometry between $\Hy(V_0,f_0)$ and $\Hy(V, f)$ that matches the lift hyperplanes of
        $N_0$ and $N_1$. Possibly after composing with the reflection through $\Hy(W_1)$ (which is
        $k$-rational by Lemma~\ref{lemma:rho-k-rational}), we can hence see the respective universal
        covers $\widetilde{M_0}$ and $\widetilde{M_1}$ of $M_0$ and $M_1$ as lying on both sides of $\Hy(W)$ in
        $\Hy(V, f)$. We can thus
        write $M = \Gamma\bs\Hy(V, f)$, where $\Gamma = \left< \conj(\phi)(\Gamma_0),
        \Gamma_1\right>$.  It follows $\Gamma \subset \G(F_1)$, where $F_1$ is the composite of $F$
        and $K_1$. In particular, the trace field $K$ of $\Gamma$ is either $K_1$ or $F_1$, which
        in either case is multiquadratic over $k$. It follows that $\G_K$ is the ambient group
        of $\Gamma$.
        
        Finally note that the inductive proof can be started with any arithmetic piece. Thus the group $\G$
        can be the ambient group of any of these pieces, as stated in (2). 

        For a glueing of type (II), we consider two isometric sharp hypersurfaces $N_0, N_1 \subset M_1$.
        If $M_0 \subset M_1$ is an arithmetic piece containing $N_0$ and $(V_0, f_0)$ and $W_0$ are defined as above,
        the same argument gives a similitude $\phi: V_0 \otimes_k F \to V \otimes_k F$, where $F/k$ is an extension
        which is at most quadratic.
        Now by the induction hypothesis, $\POp(V_0, f_0)_{K_1}$ is an ambient group for
        $\Gamma_1$.
        It follows that we have an isometry $\psi: \Hy(V_0, f_0) \to \Hy(V,f)$ such that
        $\conj(\psi)$ is defined over $K_1$ and 
        $g = \phi \circ \psi^{-1}$ (seen as an element of $\G(\R)$) lifts an isometry $N_1 \cong N_0$.
        Now by construction, there exists $g' \in \G(K)$ such that $\Gamma = \left< \Gamma_1 , g' \right>$, and
        $g'$ induces the isometry $N_1 \cong N_0$.
        It follows that $g$ and $g'$ coincide on $\Hy(W_1)$, and thus $g \in \{g', g' \rho\}$ where $\rho$ denotes
        reflection through $\Hy(W_1)$.
        As before, this shows that $\Gamma \subset \G(F_1)$, where $F_1$ is the composite of $F$ and $K_1$.
        Thus the trace field $K$ of $\Gamma$ is multiquadratic over $k$, and $\G_K$ is the ambient group
        of $\Gamma$.
\end{proof}


\section{Homology and Volumes}
\label{sec:homology}

The main result of this section is Theorem~\ref{th:La-generated}, of which
Theorem~\ref{thm:vol-odd} will be a direct consequence. Since these results hold up to 
commensurability, we will restrict ourselves to considering lattices that 
are torsion-free and orientation-preserving. The basic idea is, given an ambient group $\G_K$, to
compare fundamental classes of lattices $\Gamma \subset \G(K)$ in a $\Q$-vector space $\La(\G)$; the
latter will be defined in Sect.~\ref{sec:fund-class-volum}.
Many of the proofs are generalizations 
of the arguments in \cite{Eme-quasi}, which deals with quasi-arithmetic lattices.
Prop.~\ref{prop:homology-generated} is completely new; it is the key result needed at the level of
ambient groups. Its proof is given in Sect.~\ref{sec:homology-surj} 

\subsection{Fundamental classes and volumes} \label{sec:fund-class-volum}
Let $G = \PO(n,1)$ and let $\Omega = \partial \Hy^n$ denote the geometric boundary of $\Hy^n$.
Any subgroup $S \subset G$ acts naturally on $\Omega$; we write
$H_n(S, \Omega)$ for the homology of $S$ \emph{relative to $\Omega$}; see \cite[Sect.~1.6]{Eme-quasi}.
By definition $H_n(S, \Omega)$ is the homology group $H_{n-1}(S; J\Omega)$ with coefficients in the
kernel of the augmentation map $\Z\Omega \to \Z$, and it follows that there is a canonical
``connecting'' map  from the usual homology:
\begin{align}
        \label{eq:delta}
        \delta:& H_n(S) \to H_n(S, \Omega).  
\end{align}
If $S$ acts freely on $\Omega$, this map is an isomorphism.

In the case where $S = \Gamma\subset G^\circ$ is a torsion-free lattice, we have that $H_n(\Gamma, \Omega)$ 
is isomorphic to the singular homology of the end-compactification $\widehat M$ of
$M = \Gamma \bs \Hy^n$ (see \cite[Sect.~3]{Eme-quasi}).
Thus $H_n(\Gamma, \Omega) \cong \Z$; let $\xi$ denote the generator that corresponds 
to the positive orientation in $H_n(\widehat M)$. We define the \emph{fundamental class} $[\Gamma] \in
H_n(G, \Omega; \Q)$ as the image of $\xi$ under the map induced by the inclusion $\Gamma \subset G$.
This notion captures the volume:
\begin{prop} \label{prop:vol-homology}
  There exists a linear map $v: H_n(G, \Omega; \Q) \to \R$ such that $v([\Gamma]) =
  \vol(\Gamma \bs \Hy^n)$ for any torsion-free lattice $\Gamma \subset G^\circ$.
\end{prop}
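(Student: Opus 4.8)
The plan is to construct the volume functional $v$ by way of the Gauss–Bonnet / Chern–Weil philosophy: the hyperbolic volume form is (a constant multiple of) an invariant differential form on $G/K$, hence defines a continuous cohomology class in $H^n_{\mathrm{ct}}(G;\R)$, and pairing relative homology against this class recovers volumes. More precisely, I would first recall that the $G$-invariant volume form $\omega_{\mathrm{vol}}$ on $\Hy^n = G/K$ represents a nonzero class in $H^n_{\mathrm{ct}}(G;\R) \cong H^n(\g,\frakk;\R)$, and that for a torsion-free lattice $\Gamma$ the restriction of this class to $H^n(\Gamma;\R)$, evaluated on the fundamental class of the closed (or end-compactified) manifold, gives exactly $\vol(\Gamma\bs\Hy^n)$. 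This is the standard proportionality principle; the only subtlety is the noncompact case, which is handled by the relative homology $H_n(\Gamma,\Omega)$, and I would cite \cite[Sect.~3]{Eme-quasi} for the identification $H_n(\Gamma,\Omega)\cong H_n(\widehat M)$.

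The second step is to lift this to the level of $G$ itself. The point is that $v$ must be defined on $H_n(G,\Omega;\Q)$ — the homology of the \emph{whole} group relative to $\Omega$ — not just on each $H_n(\Gamma,\Omega)$, so that it can then be pulled back compatibly along every inclusion $\Gamma\subset G$. I would produce $v$ as the cap/evaluation pairing with a suitable class in the dual theory: concretely, relative group homology $H_n(G,\Omega;\Q) = H_{n-1}(G;J\Omega\otimes\Q)$ is paired with $H^{n-1}(G;\Hom(J\Omega,\R))$, or — more transparently — one uses that continuous cohomology $H^n_{\mathrm{ct}}(G;\R)$ maps to the relative object dual to $H_n(G,\Omega;\R)$, because $\Omega$ is the boundary at infinity and invariant forms extend. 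The class of $\omega_{\mathrm{vol}}$ then gives the desired linear functional $v$ on $H_n(G,\Omega;\Q)$, and by naturality of the connecting map $\delta$ in \eqref{eq:delta} together with the compatibility of continuous cohomology with the inclusion $\Gamma\hookrightarrow G$, we get $v([\Gamma]) = v(\delta_*\xi) = \langle[\omega_{\mathrm{vol}}],\xi\rangle = \vol(\Gamma\bs\Hy^n)$.

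Alternatively — and this is probably the cleaner route to write up — I would invoke the result already available in the literature: this exact proposition is \cite[Prop.~something]{Eme-quasi} (the volume map is constructed there in the course of treating quasi-arithmetic lattices), and nothing in its construction uses arithmeticity; it only uses that $G = \PO(n,1)$ and that $\Omega = \partial\Hy^n$. So the proof can legitimately be a one-line reduction: \emph{This is proved in \cite[Sect.~3]{Eme-quasi}; we briefly recall the construction.} Then give the two-sentence summary: $v$ is evaluation against the bounded (equivalently, continuous-cohomology) volume class, linearity is clear, and the identity $v([\Gamma])=\vol(\Gamma\bs\Hy^n)$ is the Gauss–Bonnet/proportionality computation on $\widehat M$.

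The main obstacle is purely bookkeeping rather than conceptual: making sure the volume class is genuinely defined at the level of $H_n(G,\Omega;\Q)$ and behaves naturally, i.e., that the functional does not secretly depend on $\Gamma$. The resolution is that continuous cohomology of $G$ receives the invariant volume form intrinsically (independent of any lattice), and the relative homology $H_n(-,\Omega)$ is built so that the boundary $\Omega$ absorbs exactly the non-compactness of the cusps — so the pairing is well-defined and the connecting map $\delta$ intertwines everything. One should also note the normalization: the constant relating $\omega_{\mathrm{vol}}$ to the metric volume is absorbed into the definition of $v$, which is fine since we only need \emph{some} linear map realizing the stated equality.
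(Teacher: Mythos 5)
Your proposal is correct and coincides with the paper's proof, which simply cites \cite[Prop.~1.7]{Eme-quasi}; your ``alternative, cleaner route'' at the end is exactly what the paper does, and your first two paragraphs are a faithful sketch of the construction carried out in that reference.
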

\begin{proof}
See \cite[Prop.~1.7]{Eme-quasi}.
\end{proof}

\subsection{The space $\La(\G)$}%
\label{sec:La-G}

Let $K \subset \R$ be a number field, and let $\G$ be a connected $K$-group such that $\G(\R)^\circ
= \PO(n,1)^\circ$; for the moment we are not assuming that $\G$ is admissible (nor
pseudo-admissible).
The inclusion $\G(K) \subset \G(\R)$ composed with the map $\delta$  in \eqref{eq:delta} induces a map 
\begin{align}
        H_n(\G(K); \Q) \to H_n(\G(\R), \Omega; \Q),
\end{align}
whose image will be denoted by $\La(\G)$. If $\G = \G_{0,K}$ for some $k$-group $\G_0$ and some
extension $K/k$ (so that $\G_0(\R) = \G(\R)$), then $\La(\G_0) \subset \La(\G)$ (where $\La(\G_0)$ is the image of $H_n(\G_0(k);
\Q)$).


\begin{lemma} \label{lem:fund-in-La}
        Let $\Lambda \subset \G(K) \cap \G(\R)^\circ$ be a torsion-free lattice.
  Then $[\Lambda]$ lies in $\La(\G)$.
\end{lemma}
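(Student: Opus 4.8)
The plan is to chase the definitions and compare two maps into $H_n(\G(\R),\Omega;\Q)$. By definition, the fundamental class $[\Lambda]$ is the image, under the map induced by the inclusion $\Lambda \subset \G(\R)$, of the canonical generator $\xi \in H_n(\Lambda,\Omega;\Q)\cong\Q$; and $\La(\G)$ is by definition the image of $H_n(\G(K);\Q)$ under the map $H_n(\G(K);\Q)\to H_n(\G(\R),\Omega;\Q)$ obtained by composing the inclusion $\G(K)\subset\G(\R)$ with $\delta$ of \eqref{eq:delta}. So it suffices to exhibit a class in $H_n(\G(K);\Q)$ whose image in $H_n(\G(\R),\Omega;\Q)$ equals $[\Lambda]$.

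First I would use the hypothesis $\Lambda\subset\G(K)\cap\G(\R)^\circ$ and the inclusion $\Lambda\subset\G(K)$ to form the commutative diagram of group inclusions $\Lambda\hookrightarrow\G(K)\hookrightarrow\G(\R)$, together with the $\Lambda$-equivariant, $\G(K)$-equivariant, $\G(\R)$-equivariant action on $\Omega=\partial\Hy^n$ (via $\G(\R)^\circ=\PO(n,1)^\circ$). Applying relative homology $H_n(-,\Omega;\Q)$ functorially gives a commuting triangle
\begin{equation}
H_n(\Lambda,\Omega;\Q)\longrightarrow H_n(\G(K),\Omega;\Q)\longrightarrow H_n(\G(\R),\Omega;\Q),
\end{equation}
and this triangle is compatible with the connecting maps $\delta$ of \eqref{eq:delta} by naturality of $\delta$ in the group variable. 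Since $\Lambda$ is a torsion-free lattice in $\G(\R)^\circ$, it acts freely on $\Omega$ (a torsion-free discrete subgroup of $\PO(n,1)$ has no fixed points on the sphere at infinity), so $\delta\colon H_n(\Lambda;\Q)\to H_n(\Lambda,\Omega;\Q)$ is an isomorphism; let $\tilde\xi\in H_n(\Lambda;\Q)$ be the preimage of the chosen generator $\xi$. Push $\tilde\xi$ forward to a class $c\in H_n(\G(K);\Q)$ along $\Lambda\subset\G(K)$.

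Then I would verify that the image of $c$ in $H_n(\G(\R),\Omega;\Q)$ is exactly $[\Lambda]$: by naturality of $\delta$, the image of $c$ equals the image of $\delta(\tilde\xi)=\xi$ under $H_n(\Lambda,\Omega;\Q)\to H_n(\G(\R),\Omega;\Q)$, which factors through $H_n(\G(K),\Omega;\Q)$; but the map $H_n(\Lambda,\Omega;\Q)\to H_n(\G(\R),\Omega;\Q)$ is precisely the one defining $[\Lambda]$. Hence $[\Lambda]$ lies in the image of $H_n(\G(K);\Q)\to H_n(\G(\R),\Omega;\Q)$, i.e. in $\La(\G)$. This is essentially a formal diagram chase; the only point requiring a moment's care — and the nearest thing to an obstacle — is confirming that $\Lambda$ acts freely on $\Omega$ so that $\delta$ is an isomorphism on $\Lambda$ (otherwise $\xi$ need not lift to ordinary homology), which follows from torsion-freeness of $\Lambda$ as a discrete subgroup of $\PO(n,1)$. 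Everything else is functoriality of relative group homology and of the connecting map $\delta$, exactly as set up in Sect.~\ref{sec:fund-class-volum}.
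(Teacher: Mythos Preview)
Your diagram chase has a genuine gap. The assertion that a torsion-free discrete subgroup of $\PO(n,1)$ acts freely on $\Omega=\partial\Hy^n$ is false: parabolic isometries have infinite order yet fix a point on the boundary. When $\Lambda$ is non-uniform the cusps of $M=\Lambda\bs\Hy^n$ correspond exactly to $\Lambda$-orbits of such fixed points. In that case $M$ is an open $n$-manifold, so $H_n(\Lambda;\Q)\cong H_n(M;\Q)=0$, whereas $H_n(\Lambda,\Omega;\Q)\cong H_n(\widehat M;\Q)\cong\Q$. Hence $\delta\colon H_n(\Lambda;\Q)\to H_n(\Lambda,\Omega;\Q)$ is the zero map, the generator $\xi$ admits no preimage $\tilde\xi$, and the chase collapses. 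Your argument is valid only in the cocompact case.

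The paper's proof defers to \cite[Prop.~6.1 and diagram~(6.2)]{Eme-quasi}, where the point is precisely to circumvent this obstruction by working at the level of $\G(K)$ rather than $\Lambda$: one shows that the image of $\xi$ in $H_n(\G(K),\Omega;\Q)$ does lie in the image of $\delta\colon H_n(\G(K);\Q)\to H_n(\G(K),\Omega;\Q)$. This uses that the boundary contributions coming from cusp stabilizers in $\Lambda$ become trivial once pushed into the much larger group $\G(K)$ --- a statement that requires an actual argument about the homology of parabolic point stabilizers, not just functoriality. So the ``only point requiring a moment's care'' is in fact the whole content of the lemma in the non-uniform case.
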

\begin{proof}
  This follows from the commutative diagram (6.2) and Prop.~6.1 of \cite{Eme-quasi}, whose proof
  works in our more general setting.
\end{proof}

This last result permits to see the fundamental class of a lattice $\Gamma$ as an element of
$\La(\G)$, where $\G$ is the ambient group of $\Gamma$. By Prop.~\ref{prop:vol-homology} the class
$[\Gamma]$ determines the volume; the space $\La(\G)$ might be thought as a replacement for the
Bloch group, which appears in the case of dimension $n=3$ (see for instance
\cite[Sect.~12.7]{MaclReid03}). In the rest of this section we will show that for $\G$
pseudo-admissible of the first type, one can exhibit generators for $\La(\G)$ corresponding to 
classes of arithmetic lattices.

\begin{rmk} \label{rmk:Bloch}
        The definition of $\La(\G)$ is comparable to the ``higher groups'' $\mathscr{P}_n$ in Neumann
        and Yang \cite[Sect.~8]{NeuYan99}, with the difference (crucial for our applications) that we work with $K$-points instead
        of $\R$-points. 
\end{rmk}


\subsection{Subordinated fundamental classes}%
\label{sec:subordinated_fundamental_classes}

Assume now that $\G$ is connected and  pseudo-admissible of the first type over a multiquadratic extension $K/k$.
That is, there exists an admissible quadratic form $f$ over $k$ such that $\G = \aPO_{f,K}^+$.
As in Sect.~\ref{sec:intro-volumes} we write $f = f(x_0, \dots, x_n)$ in a diagonal form with negative $x_0$ coefficient,
and $K = k(\sqrt{a_1}, \dots, \sqrt{a_r})$ with $a_j \in k$. For $i = (i_1, \dots, i_r) \in
\{0,1\}^r$  we  set $\alpha_i = \sqrt{ {a_1}^{i_1} \cdots {a_r}^{i_r}}$.
Then the $\alpha_i$ form a basis of $K/k$.

We let $\G_i$ denote the $k$-group $\POp_{f_i}$, where  $f_i = f(x_0, \dots, x_{n-1}, \alpha_i x_n)$. 
The diagonal matrix $D_i$ with diagonal entries $1, \dots, 1, \alpha_i$ satisfies $f_i = f \circ D_i$, and
induces (via conjugation) an isomorphism of algebraic groups $\G_{i,K} \xrightarrow{\sim} \G$.
In particular, it induces an inclusion $\G_i(k) \inj \G(K)$.
The proof of the following proposition is the subject of Sect.~\ref{sec:homology-surj}.

\begin{prop} \label{prop:homology-generated}
  The following natural map, induced by the inclusions $\G_i(k) \inj \G(K)$,  is surjective:
  \[
    \Oplus_{i\in\{0,1\}^r} H_n(\G_i(k); \Q) \longto H_n(\G(K); \Q).
  \]
\end{prop}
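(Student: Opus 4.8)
The plan is to reduce the statement to a computation about the group of $K$-points of $\G = \aPO^+_{f,K}$ by using the concrete description of these points via similitudes of the quadratic space $(V, f)$ over $K$, together with the fact that $K/k$ is multiquadratic with basis $\{\alpha_i\}$. First I would recall that $\G(K) = \GO(V_K, f)/K^\times$ (as in Sect.~\ref{sec:models}); the maps $\G_i(k) \inj \G(K)$ coming from the diagonal matrices $D_i$ identify $\G_i(k)$ with the image in $\G(K)$ of those similitudes of $V_K$ that preserve (up to scaling) the $k$-rational decomposition $V = W \perp \langle \alpha_i \rangle$, where $W = \langle x_0, \dots, x_{n-1}\rangle$ and the last coordinate is rescaled by $\alpha_i$. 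So the content of the proposition is that, at the level of $H_n(-;\Q)$, the homology of $\G(K)$ is generated by the homology of these ``coordinate-split'' subgroups. I would try to prove this by finding, for each element (or each compact piece relevant to $H_n$) of $\G(K)$, a way to write it in terms of the $\G_i(k)$ up to a homologically negligible correction.

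The key step is to exploit the low rank: $\G(\R)^\circ = \PO(n,1)^\circ$ has real rank $1$, so its maximal compact subgroup is essentially $\PO(n)$ (or $\SO(n)$), and in the homology $H_n(\G(K);\Q)$ the relevant cycles are carried by arithmetic-type lattices, by the stability/comparison arguments already used in \cite{Eme-quasi}. I expect the cleanest route is: (i) show that $\G(K)$ is generated, as an abstract group, by the union of the subgroups $\G_i(k)$ together with possibly a bounded correction term — more precisely, decompose a general similitude of $V_K$ using a normal form (e.g. writing it as a product of reflections through $K$-rational hyperplanes, Cartan–Dieudonné over $K$) and then track which hyperplanes are already ``visible'' over some $k$-rational subspace after applying a suitable $D_i$; (ii) translate a group-theoretic generation statement into a homology surjectivity statement via a Mayer–Vietoris / bar-complex argument, using that $H_n$ is a colimit over finitely generated subgroups and that for amalgams or generated subgroups the top homology is controlled by the pieces. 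Alternatively — and this may be what the authors do — one invokes the structure of $\G(K)$ as points of an orthogonal group over the multiquadratic field $K$ and uses restriction of scalars $\mathrm{Res}_{k}^{K}$ to relate $\G(K) = (\mathrm{Res}^K_k \G)(k)$ to a product-like object whose factors are (twists by the $\alpha_i$ of) $\G_i$, then applies a Künneth-type or transfer argument in group homology with $\Q$-coefficients.

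The main obstacle, I expect, is step (ii): even granting that $\G(K)$ is ``generated'' by the $\G_i(k)$ in a suitable sense, passing from generation of the group to surjectivity on $H_n$ is not automatic, because homology in degree $n$ can a priori see relations and higher structure. The right tool should be that we only need surjectivity after tensoring with $\Q$ and that $H_n(\G(\R),\Omega;\Q)$ is the natural target, where the image $\La(\G)$ is already known (Lemma~\ref{lem:fund-in-La}, Prop.~\ref{prop:vol-homology}) to be spanned by fundamental classes of lattices; so it would suffice to show every such fundamental class in $\La(\G)$ is a $\Q$-combination of classes $[\Lambda]$ with $\Lambda \subset \G_i(k)$ for some $i$. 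That in turn reduces to showing that any lattice $\Gamma \subset \G(K)$ is commensurable (or homologous, in $\La(\G)$) to one sitting in one of the $\G_i(k)$ — and here the multiquadratic hypothesis is exactly what lets one conjugate by the $D_i$ to clear denominators / square roots one $a_j$ at a time. I would organize the induction on $r$: the case $r=0$ is trivial ($\G = \G_0$), and the inductive step peels off one quadratic extension $k(\sqrt{a_r})$, using that over $k(\sqrt{a_1},\dots,\sqrt{a_{r-1}})$ the group $\G$ becomes $\aPO^+$ of a form with a single square-root coefficient, handled by the rank-one / reflection argument above and by Lemma~\ref{lem:extend-similitudes}-type extension-of-similitude reasoning. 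The delicate bookkeeping will be making the homological (not merely group-theoretic) surjectivity at each stage precise, presumably by a spectral-sequence or Mayer–Vietoris argument over $\Q$ as in \cite{Eme-quasi}.
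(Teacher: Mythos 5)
Your proposal does not contain a proof; it is a collection of plausible-sounding directions, and the one you yourself flag as the main obstacle is precisely where the paper's argument does all its work, and where your suggestions do not go through. Two of your concrete suggestions are actually wrong or circular: (a) it is not true that ``any lattice $\Gamma \subset \G(K)$ is commensurable (or homologous, in $\La(\G)$) to one sitting in one of the $\G_i(k)$'' --- the whole point of the theorem is that nonarithmetic examples like $\Delta_5$ are \emph{not} commensurable to any arithmetic lattice, and ``homologous in $\La(\G)$ to a $\Q$-combination'' is a restatement of what has to be proved, not a reduction of it; (b) passing from group-theoretic generation (Cartan--Dieudonn\'e over $K$) to surjectivity on $H_n$ is not automatic, as you note, and no Mayer--Vietoris or bar-complex mechanism you describe would supply it. Your restriction-of-scalars / ``K\"unneth-type'' remark is pointing in the right general direction, but the K\"unneth decomposition does not happen at the level of $\G(K)$ as an abstract group --- it happens only after passing to real points and to the compact dual.

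The paper's proof is a finite-dimensional linear-algebra computation, not a group-generation argument. Concretely: (1) one first reduces from $\G_i$, $\G$ to their simply connected covers $\sG_i$, $\sG$ via Lemma~\ref{lem:homology-simply-connected}; (2) one invokes the canonical isomorphisms
\[
H^n(\sG_i(k);\R) \cong H^n_{\mathrm{ct}}(\sG_i(k\otimes_\Q\R);\R) \cong H^n(X_i;\R)
\]
with the cohomology of the compact dual $X_i$, and similarly for $\sG(K)$, whose compact dual is a product of $2^r$ copies of $S^n$ indexed by $\Gal(K/k)$; (3) this shows both sides of \eqref{eq:lift-lambda} have dimension $2^r$, so surjectivity is equivalent to invertibility of the induced map of compact duals; (4) with respect to the natural bases $\{[X_i]\}$ and $\{[S_\sigma]\}$ that map is the matrix $B = (\sigma(\alpha_i)/\alpha_i)$, whose determinant is a nonzero multiple of the discriminant $\Delta[\{\alpha_i\}]$ of the basis $\{\alpha_i\}$ of $K/k$. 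None of steps (1)--(4) appear in your proposal, and without them --- in particular without the dimension count that converts ``surjective'' into ``injective'' and the explicit discriminant computation --- there is no proof.
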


Assuming the proposition, we are ready to state and prove the main theorem of this section.
Together with Prop.~\ref{prop:vol-homology} it directly implies Theorem~\ref{thm:vol-odd}. 
\begin{theorem} \label{th:La-generated}
  Let $\G$ be connected and pseudo-admissible over $K/k$ of the first type.
  For each $i \in \{0,1\}^r$, let $\Gamma_i$ be a torsion-free arithmetic lattice in ${\G_i(k)\cap G^\circ}$.
  Then the $\Q$-vector space $\La(\G)$ is generated by the set of fundamental classes
  $\left\{[\Gamma_i] \;|\; i \in \{0,1\}^r\right\}$.
\end{theorem}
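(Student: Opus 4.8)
The plan is to deduce Theorem~\ref{th:La-generated} by combining Proposition~\ref{prop:homology-generated} with the machinery of fundamental classes from Section~\ref{sec:fund-class-volum}. First I would unwind the definition of $\La(\G)$: it is by construction the image of $H_n(\G(K);\Q)$ under the composite $H_n(\G(K);\Q)\to H_n(\G(\R),\Omega;\Q)$, where the first arrow comes from $\G(K)\subset\G(\R)$ followed by the connecting map $\delta$ of \eqref{eq:delta}. The key observation is that the inclusions $\G_i(k)\inj\G(K)$ fit into a commutative diagram: for each $i$, the map $H_n(\G_i(k);\Q)\to H_n(\G(\R),\Omega;\Q)$ factors through $H_n(\G(K);\Q)$ on the one hand, and through $H_n(\G_i(\R),\Omega;\Q)=H_n(\G(\R),\Omega;\Q)$ (using that $D_i$ induces an isomorphism compatible with the real points, since the scaling factor is positive) on the other. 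Hence the image of $\bigoplus_i H_n(\G_i(k);\Q)$ in $H_n(\G(\R),\Omega;\Q)$ coincides with $\sum_i \La(\G_i)$, where $\La(\G_i)$ is the image of $H_n(\G_i(k);\Q)$.

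Next I would apply Proposition~\ref{prop:homology-generated}: since $\bigoplus_i H_n(\G_i(k);\Q)\to H_n(\G(K);\Q)$ is surjective, its further composition with $H_n(\G(K);\Q)\to H_n(\G(\R),\Omega;\Q)$ has the same image as $H_n(\G(K);\Q)\to H_n(\G(\R),\Omega;\Q)$, namely $\La(\G)$. Combining with the previous paragraph, this gives $\La(\G)=\sum_{i}\La(\G_i)$. So it remains to show that each $\La(\G_i)$ is spanned by the single fundamental class $[\Gamma_i]$ of the chosen arithmetic lattice $\Gamma_i\subset\G_i(k)\cap G^\circ$. This is exactly the content of the quasi-arithmetic case already treated in \cite{Eme-quasi}: each $\G_i$ is an admissible $k$-group (the form $f_i$ is admissible over $k$), so $\Gamma_i$ is an arithmetic lattice, and by \cite[Theorem~6.2 / Sect.~6]{Eme-quasi} (the $r=0$ case of the present circle of ideas) the space $\La(\G_i)$ is one-dimensional, generated by $[\Gamma_i]$. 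By Lemma~\ref{lem:fund-in-La} we already know $[\Gamma_i]\in\La(\G_i)\subset\La(\G)$, so the inclusion $\langle [\Gamma_i] \rangle\subset\La(\G_i)$ is an equality.

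Putting these together, $\La(\G)=\sum_i\La(\G_i)=\sum_i\Q\cdot[\Gamma_i]$, which is the assertion. Finally, to recover Theorem~\ref{thm:vol-odd}, apply the linear functional $v$ of Proposition~\ref{prop:vol-homology}: writing $[\Gamma]=\sum_i\beta_i[\Gamma_i]$ with $\beta_i\in\Q$ and evaluating $v$ gives $\vol(\Gamma\bs\Hy^n)=\sum_i\beta_i\vol(\Gamma_i\bs\Hy^n)$.

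The main obstacle is entirely contained in Proposition~\ref{prop:homology-generated}, whose proof is deferred to Section~\ref{sec:homology-surj}; granting it, the argument above is a formal diagram chase plus a citation to the quasi-arithmetic case. The one point requiring a little care is checking that the maps $H_n(\G_i(k);\Q)\to H_n(\G(\R),\Omega;\Q)$ really do land in $\La(\G)$ compatibly — i.e. that the isomorphism $\G_{i,K}\xrightarrow{\sim}\G$ induced by $D_i$ is compatible with the inclusions of $K$-points and with the real structure used to form $\La$ — but this follows from the fact that $D_i\in\GL_{n+1}(\R)$ has positive last entry, so conjugation by $D_i$ sends $\G_i(\R)^\circ$ to $\G(\R)^\circ$ and the model $\Hy(V,f_i)$ isometrically onto $\Hy(V,f)$, intertwining the actions on $\Omega$.
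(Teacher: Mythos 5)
Your argument is correct and follows the same approach as the paper's proof: it reduces the statement to Proposition~\ref{prop:homology-generated} for surjectivity, and to the one-dimensionality of each $\La(\G_i)$ (the quasi-arithmetic case). The paper obtains the latter slightly more directly — citing \cite[Prop.~4.2]{Eme-quasi} for $\dim H_n(\G_i(k);\Q)=1$, then using Lemma~\ref{lem:fund-in-La} and the non-vanishing of $v([\Gamma_i])=\vol(\Gamma_i\bs\Hy^n)$ to conclude $\La(\G_i)=\Q\cdot[\Gamma_i]$ — while you invoke the conclusion of the quasi-arithmetic theorem as a black box, but these are the same underlying computation. Your extra care in checking that conjugation by $D_i$ is compatible with the real structure (positivity of $\alpha_i$, hence preservation of $G^\circ$ and the boundary $\Omega$) is a detail the paper leaves implicit and is correctly handled.
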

\begin{proof}
  The algebraic groups $\G_i$ are admissible over $k$, and therefore \cite[Prop.~4.2]{Eme-quasi} shows
  that $H_n(\G(k); \Q)$ has dimension one.
  The fact that the volume of $\Gamma_i \bs \Hy^n$ is non-zero combined with Prop.~\ref{prop:vol-homology} and
  Lemma~\ref{lem:fund-in-La} implies that $\La(\G_i)$ is of dimension one and generated by $[\Gamma_i]$.
  Now Prop.~\ref{prop:homology-generated} shows that $\La(\G)$ is generated by its subspaces $\La(\G_i)$,
  and this finishes the proof.
\end{proof}

\begin{rmk} \label{rmk:dim-LaG}
        We do not claim that the classes $[\Gamma_i]$ are linearly independent. For $n$ odd it might 
        well be the case, since there are no expected congruences for the special values of
        $L$-functions that appears in the covolumes. But this justification certainly fails for $n$
        even, and it is not excluded that $\La(\G)$ has dimension $1$ there.
\end{rmk}


\subsection{The homology of ambient groups}
\label{sec:homology-surj}

The goal of this section is to prove Prop.~\ref{prop:homology-generated}.
We begin by relating the homology of an algebraic group to that of its simply-connected cover.
\begin{lemma} \label{lem:homology-simply-connected}
  Let $\G$ be a connected simple adjoint algebraic group defined over a field $K$, and let $\sG$ denote its
  simply-connected cover.
  The natural map $\sG(K) \to \G(K)$ induces a surjective map
  \[
  H_n(\sG(K); \Q) \to H_n(\G(K);\Q).
  \]
\end{lemma}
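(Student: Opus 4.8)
The plan is to use the exact sequence relating $\sG(K)$, $\G(K)$, and the Galois cohomology of the fundamental group $\mu = \ker(\sG \to \G)$, which is a finite central subgroup scheme. First I would invoke the standard fact (for $\G$ simple adjoint, $\sG$ simply connected) that there is a central extension
\[
  1 \longto \mu(K) \longto \sG(K) \longto \G(K) \stackrel{\partial}{\longto} H^1(K, \mu),
\]
so that the cokernel $Q$ of $\sG(K) \to \G(K)$ injects into $H^1(K,\mu)$. Since $\mu$ is a finite abelian group scheme (it is $\mu_2$, $\mu_4$, $\mu_2\times\mu_2$, or trivial in the cases occurring here, as $\sG$ is of type $B$ or $D$), the group $H^1(K,\mu)$ is a torsion abelian group; hence $Q$ is a torsion abelian group, in fact of bounded exponent.

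Next I would feed this into the Lyndon–Hochschild–Serre spectral sequence (or the five-term exact sequence) for group homology with $\Q$-coefficients associated with the central extension $1 \to \mu(K) \to \sG(K) \to \mathrm{im} \to 1$, where $\mathrm{im}$ denotes the image of $\sG(K)$ in $\G(K)$. Because $\mu(K)$ is finite, $H_j(\mu(K);\Q) = 0$ for $j > 0$ and $= \Q$ for $j=0$; the spectral sequence therefore collapses and gives an isomorphism $H_n(\sG(K);\Q) \xrightarrow{\sim} H_n(\mathrm{im};\Q)$. It then remains to show that the inclusion $\mathrm{im} \hookrightarrow \G(K)$ induces a surjection $H_n(\mathrm{im};\Q) \surj H_n(\G(K);\Q)$. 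This follows from the fact that $\mathrm{im}$ is a normal (indeed central-quotient) subgroup of $\G(K)$ with abelian torsion quotient $Q$: again by the LHS spectral sequence for $1 \to \mathrm{im} \to \G(K) \to Q \to 1$ with $\Q$-coefficients, the terms $H_p(Q; H_q(\mathrm{im};\Q))$ vanish for $p>0$ since $Q$ is torsion abelian and the coefficients are $\Q$-vector spaces, so $H_n(\G(K);\Q) = H_0(Q; H_n(\mathrm{im};\Q))$, a quotient of $H_n(\mathrm{im};\Q)$; in particular the map $H_n(\mathrm{im};\Q) \to H_n(\G(K);\Q)$ is surjective.

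Composing the two maps gives the desired surjection $H_n(\sG(K);\Q) \surj H_n(\G(K);\Q)$.

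The main obstacle I anticipate is purely bookkeeping: making sure the two short exact sequences of abstract groups are correctly extracted from the Galois-cohomology sequence (distinguishing $\mu(K)$, the image of $\sG(K)$, and the subtlety that $H^1(K,\mu)$ need not be finite but is always torsion), and checking that vanishing of $H_p(Q;-)$ for $p>0$ really only needs $Q$ torsion abelian and $\Q$-coefficients — which is the standard fact that the homology of a torsion group with divisible (uniquely divisible) coefficients is concentrated in degree $0$, applied after noting a torsion abelian group is the filtered colimit of its finite subgroups. No deep input is needed beyond these; the content of the lemma is really just that passing to a central isogeny and to a central-type quotient is invisible to rational homology.
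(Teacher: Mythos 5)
Your proof is correct and follows essentially the same route as the paper's: both factor the map through the image of $\sG(K)$ in $\G(K)$ (equivalently $\sG(K)/C(K)$), use the Galois-cohomology exact sequence to identify the cokernel as a torsion abelian subgroup of $H^1(K,\mu)$, and then apply the Lyndon–Hochschild–Serre spectral sequence twice with $\Q$-coefficients, collapsing once because $\mu(K)$ is finite and once because the cokernel is torsion. The paper delegates the details to the analogous proof in \cite[Prop.~4.2]{Eme-quasi}; your write-up reconstructs that argument explicitly, with the coinvariants step appearing as your identification $H_n(\G(K);\Q)=H_0(Q;H_n(\mathrm{im};\Q))$.
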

\begin{proof}
  We borrow the notation from the proof of \cite[Prop.~4.2]{Eme-quasi}, since our proof is essentially the same.
  In particular, $C$ denotes the center of $\sG$. 
  Consider the following composition of maps
  \[
    \begin{tikzcd}[row sep=small, column sep =small]
      H_n(\sG(K); \Q) \rar & H_n(\sG(K)/C(K); \Q) \dar &\\
      & H_n(\sG(K)/C(K); \Q)_A \rar & H_n(\G(K); \Q) 
    \end{tikzcd}
  \]
  where the horizontal maps are induced by the quotient $\sG(K) \surj \sG(K)/C(K)$ and the inclusion
  $\sG(K)/C(K) \inj \G(K)$ coming from the exact sequence (4.3) in \cite{Eme-quasi}, respectively.
  
  These horizontal maps coincide with the edge homomorphisms of the spectral sequences (4.5) and (4.4) in
  \cite{Eme-quasi} respectively, 
  which are isomorphisms since both sequences collapse at $E^2$ (see \cite[6.8.2]{Weib94}).
  The vertical map is obviously surjective, and looking at the definition of edge homomorphisms it is routine to check
  that the composition of the three maps is induced by $\sG(K) \to \G(K)$.
\end{proof}

\begin{proof}[Proof of Proposition~\ref{prop:homology-generated}]
Let $\lambda_i$ be the inclusion $\G_i(k) \to \G(K)$ defined in
Sect.~\ref{sec:subordinated_fundamental_classes}. Since $\lambda_i$ is defined by conjugating the
quadratic form $f$, we may lift it at the level of the Spin groups, i.e., at the level of the
simply-connected covers of $\G_i$ and $\G$.  Now using Lemma~\ref{lem:homology-simply-connected}, 
we see that to prove Prop.~\ref{prop:homology-generated} it suffices to show that the induced map
\begin{align} \label{eq:lift-lambda}
    \Oplus_{i\in\{0,1\}^r} H_n(\sG_i(k); \Q) \longto H_n(\sG(K); \Q)
\end{align}
is surjective. We will show that it is actually an isomorphism.

Recall (see \cite[Sect.~6.3]{Burgos} and \cite[Sect.~4]{Eme-quasi}) that for the cohomology with coefficients in $\R$ we
have two canonical isomorphisms from the continuous cohomology of $\sG_i(k\otimes_\Q \R)$: 
\begin{align} \label{eq:cohom-cont}
        H^n(\sG_i(k); \R) \xleftarrow{\sim} H^n_\mathrm{ct}(\sG_i(k\otimes_\Q \R);\R) \xrightarrow{\sim} H^n(X_i; \R),
\end{align}
where $X_i$ is the compact dual space (or compact twin) associated with $\sG_i(k\otimes_\Q \R)$.
For $\G_i$ admissible, $X_i$ is the $n$-sphere;
more precisely, $X_i$ is the symmetric space associated with $\hat f_i$, the positive definite version of $f_i$ (i.e.,
$f_i$ with the coefficient of $x_0$ made positive).
It follows that the left hand side of \eqref{eq:lift-lambda}
has dimension $s = 2^r$. The same argument shows that the dimension of $H^n(\sG(K); \R)$ equals that of
$H^n(X; \R)$, where the compact dual $X$ associated with $\sG(K\otimes_\Q \R)$ is a product $S^n
\times \cdots \times S^n$ of $s$ copies of the $n$-sphere; this dimension is $s$ by the Künneth
formula.


To prove that \eqref{eq:lift-lambda} is indeed an isomorphism it clearly suffices to show that the
same map with coefficients extended to $\R$ is an isomorphism. This allows (after dualizing the
cohomology) to use the canonical maps in \eqref{eq:cohom-cont} and their analogues for $\sG$.
Observe that the first map in \eqref{eq:cohom-cont} is induced by the inclusion $k \subset k \otimes_\Q \R$ and
the second is just the natural isomorphism between the corresponding Lie algebras (combined with multiplication
by a scalar, see \cite[Sect.~6.3]{Burgos}).
Both maps are functorial in the sense that they commute with the maps induced by
the inclusions $\G_i(k) \to \G(K)$.
Then it is enough to show that the map  
\begin{align} \label{eq:hom-cpt-dual}
        \theta: \Oplus_{i\in\{0,1\}^r} H_n(X_i; \R) \longto H_n(S^n \times \cdots \times S^n; \R) 
\end{align}
is an isomorphism.
For the multi-index $0 = (0,\dots,0)$ we fix a generator $[X_0]$ of $H_n(X_0; \R)$.
In \eqref{eq:cohom-cont} the only contribution to the cohomology of ${\sG_0(k \otimes_\Q \R)}$ comes
from the noncompact factor $\sG_0(\R)$. Similarly for $\sG(K\otimes_\Q \R)$, whose noncompact
factors form a product $\prod_{\sigma} \sG_0(\R)$ indexed by the set of embeddings $\sigma: K \to \R$
fixing $k$ (since $\sG = \sG_{0,K}$), that is, by the Galois group $\Gal(K/k)$ since $K/k$ is Galois.
The Künneth formula then gives an isomorphism
\[
  H_n(S^n \times \cdots \times S^n; \R) \cong \Oplus_{\sigma} H_n(S^n; \R),
\]
with the right hand side indexed by $\Gal(K/k)$.
Let $([S_\sigma])_\sigma$ denote the image of $[X_0]$ via \eqref{eq:hom-cpt-dual}.

The matrix $D_i = \diag(1,\dots,1,\alpha_i)$ induces a homeomorphism $X_i \cong X_0$ (since
$\hat f_0 \circ D_i = \hat f_i$) and thus an isomorphism in homology; we denote by $[X_i] \in H_n(X_i;\R)$
the image of $[X_0]$.
A careful analysis of the maps $\lambda_i$ then shows that $\theta([X_i]) = ((\sigma(\alpha_i)/\alpha_i) [S_\sigma])_\sigma$.
With respect to the bases $\{ [X_i] \}$ and $\{[S_\sigma]\}$ the map $\theta$
is thus given by the matrix $B = (\sigma(\alpha_i)/\alpha_i)$. Its determinant equals 
\begin{align} \label{eq:det-B}
        \det(B) = \prod_i \alpha_i^{-1} \cdot \Delta[\{\alpha_i\}],
\end{align}
where $\Delta[\{\alpha_i\}]$ is the discriminant in $K/k$ of the basis $\{\alpha_i\}$. Hence $\det(B)$
is nonzero, and the proposition is proved.
\end{proof}


\section{Reflection groups and pseudo-admissibility}
\label{sec:appendix}

In this section we explain how to check if a given hyperbolic reflection group is pseudo-arithmetic. We
have applied the method to all examples contained in the tables provided  
by the program CoxIter developed by Guglielmetti \cite{coxiter}. We have found that all these
reflection groups are pseudo-arithmetic for $n>3$; see Sect.~\ref{sec:computat} for a summary of the
results.

\subsection{Ambient groups of Coxeter groups}
\label{sec:Coxeter trace field}

The references for this section are Vinberg \cite[Theorem~5]{Vinb71} and Maclachlan \cite[Sect.~9]{Maclachlan}.
Let $q = -x_0^2 + x_1^2 + \cdots + x_n^2$ denote the standard quadratic form in $\Hy^n$, with associated bilinear form
$(x,y) \mapsto B(x,y) = q(x+y) - q(x) - q(y)$.
Let $P \subset \Hy^n$ be a Coxeter polytope with $r$ faces whose orthogonal vectors are denoted by $e_1,\dots,e_r$, and 
are chosen to have $q$-norm one.
Let $\Gamma \subset \PO(n,1)$ be the Coxeter group associated with $P$, and let $G = (a_{ij})_{ij}$ be its Gram matrix; its entries are $a_{ij} = B(e_i,e_j)$.
For a subset $\{ i_1 , \dots, i_k \} \subset \{1, \dots, r\}$, define
\[
b_{i_1, \dots, i_k} = a_{i_1i_2} \cdots a_{i_{k-1} i_k} a_{i_k {i_1}} \quad \text{and} 
\quad v_{i_1, \dots, i_k} = (a_{1 i_1} a_{i_1 i_2} \cdots a_{i_{k-1} i_k}) \, e_{i_k}.
\]
The trace field of $\Gamma$ is then the field $K = \Q(\{b_{i_1, \dots, i_k}\})$ generated by all the possible
``cyclic'' products $b_{i_1, \dots, i_k}$, and the ambient group is the group $\aPO_{f}$ where $f$ is the
quadratic form $q$ restricted to the $K$-vector space $V$ spanned by the $v_{i_1, \dots, i_k}$.
Note that the quadratic form $f$ can be determined
without explicitly computing the $e_i$; one proceeds as follows:
\begin{enumerate}
\item Extract a minor $M$ of the Gram matrix which has full rank $n+1$. 
  This minor is then given by $M = (B(e_{m_i}, e_{m_j}))_{ij}$ for some sequence $m_1, \dots, m_{n+1}$, and 
  it follows that the vectors $e_{m_1}, \dots e_{m_{n+1}}$ are linearly independent over $\R$.
\item For each  $j = 1, \dots, n+1$, find a sequence
  of indices $l_1, \dots, l_k$ with $l_k = m_j$ such that the product
  $a_{1 l_1} a_{l_1 l_2} \cdots a_{l_{k-1} l_k}$ is non-zero (possible since the Coxeter diagram is
  connected), and set $w_j = v_{l_1, \dots, l_k}$.
\item The $K$-vector space $V$ then admits $\{w_1, \dots, w_{n+1}\}$ as a basis.
  It follows that $f$ has the matrix representation $A = (B(w_i, w_j))_{ij}$, whose entries can be easily 
  computed from the entries of $G$.
\end{enumerate}

\subsection{Quadratic forms over multiquadratic extensions} \label{sec:qforms multiquadratic}

For this section we refer to Lam's book \cite{Lam} and the two papers \cite{ELWoriginal,ELWsequence}
of Elman, Lam and Wadsworth. 
If $k$ is a field, $W(k)$ will denote the Witt group (or ring) of $k$ (as defined in \cite[Chap.~II]{Lam}).
Recall that two rank $n$ regular quadratic forms $f,f'$ over $k$ are isometric if and only if they represent the same 
element in $W(k)$, and that $0 \in W(k)$ is represented by any hyperbolic form.
Since we will use this group only to compare regular forms of the same rank, we will usually identify quadratic forms 
with their image in $W(k)$.

For a field extension $K/k$, we let $r_k^K:W(k) \to W(K)$ denote the group homomorphism induced by the inclusion 
$k \subset K$ (sending a quadratic form $f$ to $f_K$, the same form seen over $K$).
If $K/k$ is a quadratic extension with $K = k(\sqrt{a})$, $a \in k$, let $s: K \to k$ be the $k$-linear map
sending $1 \mapsto 0$ and $\sqrt{a} \mapsto 1$.
This map induces a group homomorphism $s_k^K: W(K) \to W(k)$ called the \emph{transfer map}:
a quadratic form $q\in W(K)$ of rank $n$ is mapped to $s_k^K(q) = s \circ q$, seen as a quadratic form of rank $2n$ over $k$.
See \cite[Sect.~VII.3]{Lam} for details.

Let now $K/k$ be a multiquadratic extension of a global field $k$.
The two results we will need for our criterion are:
\begin{enumerate}
\item $K/k$ is an \emph{excellent} extension, i.e., a quadratic form $q$ is defined over $k$
  (by which we mean $q \cong f_K$ for some quadratic form $f$ over $k$)
  if and only if $q \in \im (r_k^K)$.
  This is a consequence of \cite[Theorem~2.13]{ELWoriginal} which states that any extension of a global field $k$ containing a 
  Galois extension of $k$ of even degree is excellent.
  See also \cite[Sect.~XII.4]{Lam}.
\item There is an exact sequence of Witt group
\[
\begin{tikzcd}
  W(k) \rar["r_k^K"] & W(K) \rar["\prod s_F^K"] & \displaystyle \prod W(F),
\end{tikzcd}
\]
where the product on the right is taken on all fields $k \subset F \subset K$ with $[K:F] = 2$.
This is the end of the sequence (1.1) in \cite{ELWsequence}, whose exactness is proven for global fields in Theorem C of the same paper.
\end{enumerate}

From these two results we deduce:
\begin{prop}\label{prop:criterion}
  A regular quadratic form $q\in W(K)$ is defined over $k$ if and only if for all subfields
  $k \subset F \subset K$ with $[K:F] = 2$, 
  we have $s_F^K(q) = 0 \in W(F)$, that is, if the corresponding form is hyperbolic.
\end{prop}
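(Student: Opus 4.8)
The plan is to derive Proposition~\ref{prop:criterion} directly from the two input facts (1) and (2) recalled just above it. Recall what must be shown: for a regular quadratic form $q \in W(K)$, being defined over $k$ is equivalent to $s_F^K(q) = 0$ in $W(F)$ for every intermediate field $k \subset F \subset K$ with $[K:F] = 2$.

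For the forward implication, suppose $q$ is defined over $k$, say $q \cong f_K$ for some form $f$ over $k$; equivalently, by excellence (fact (1)), $q$ lies in the image of $r_k^K$. The exact sequence in fact (2),
\[
\begin{tikzcd}
  W(k) \rar["r_k^K"] & W(K) \rar["\prod s_F^K"] & \displaystyle \prod W(F),
\end{tikzcd}
\]
tells us precisely that $\im(r_k^K) = \ker\!\big(\prod_F s_F^K\big)$. Hence $q \in \im(r_k^K)$ forces $\big(\prod_F s_F^K\big)(q) = 0$, which is exactly the statement that $s_F^K(q) = 0 \in W(F)$ for every $F$ with $[K:F]=2$. (Here one should note that $s_F^K$ makes sense since $K/F$ is quadratic; the transfer map depends on a choice of the $F$-linear functional $s$, but whether $s_F^K(q)$ vanishes does not, because any two such functionals differ by a nonzero scalar and scaling is a Witt-group automorphism preserving $0$.) Finally, $s_F^K(q) = 0$ in $W(F)$ means the rank-$2\,\mathrm{rank}(q)$ form $s \circ q$ over $F$ is hyperbolic, as asserted.

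For the converse, suppose $s_F^K(q) = 0$ in $W(F)$ for all intermediate $F$ of index $2$. Then $q \in \ker\!\big(\prod_F s_F^K\big)$, so by the exactness of the sequence in fact (2) we get $q \in \im(r_k^K)$. By the excellence of $K/k$ (fact (1)), membership in $\im(r_k^K)$ is equivalent to $q$ being defined over $k$, i.e.\ $q \cong f_K$ for some $f$ over $k$. This completes the equivalence.

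Honestly, there is no serious obstacle here: the proposition is a packaging of facts (1) and (2), and the only points needing a word of care are (a) confirming that fact (1) genuinely applies — $K/k$ is a multiquadratic (hence Galois, of $2$-power degree, so of even degree once nontrivial) extension of a global field, matching the hypothesis of \cite[Theorem~2.13]{ELWoriginal}; when $K=k$ the statement is vacuous and holds trivially — and (b) the remark that vanishing of the transfer is independent of the chosen functional, so the criterion is well posed. The translation between ``$s_F^K(q)=0$ in $W(F)$'' and ``the corresponding form is hyperbolic'' is just the definition of the Witt group recalled in Sect.~\ref{sec:qforms multiquadratic}.
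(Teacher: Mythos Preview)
Your proof is correct and is exactly the argument the paper has in mind: the proposition is stated immediately after facts (1) and (2) with the phrase ``From these two results we deduce'', and your write-up simply unpacks that deduction (exactness gives $\im(r_k^K)=\ker\big(\prod s_F^K\big)$, and excellence identifies $\im(r_k^K)$ with the forms defined over $k$). Your side remarks on the vacuous case $K=k$ and on independence of the chosen functional are accurate and harmless additions.
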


\subsection{The computations}
\label{sec:computat}
Let $\Gamma \subset \PO(n,1)$ be a Coxeter group with trace field $K$ and ambient group $\aPO_f$
for some quadratic form $f$ over $K$.
In order to check if $\Gamma$ is pseudo-admissible, one needs to find a subfield $k \subset K$
such that $K/k$ is multiquadratic and a $k$-form of $\aPO_f$ which is admissible. 
Using Prop.~\ref{prop:criterion}, one can already find a candidate for $k$: the intersection
of all codimension 2 subfields $F \subset K$ such that $s^K_F(f)$ is hyperbolic.
If one is then able to find an admissible quadratic form $g$ over $k$ such that $f \cong g_K$,
one can conclude: $\Gamma$ is pseudo-arithmetic over $K/k$ with ambient group $\aPO_g$.

If however the field $K$ is multiquadratic over $\Q$ and the form $f$ is itself defined over $\Q$,
the computation of such a $g$ becomes superfluous, since any quadratic form over $\Q$ of signature
$(n,1)$ is admissible.
It turns out that all the examples in the table of Coxeter groups of \cite{coxiter} fulfill
this simpler necessary condition for pseudo-arithmeticity.

The computations were done using the mathematical program Sage \cite{sagemath} and an algorithm of
\cite{KC} for checking hyperbolicity over number fields.
In Tables \ref{tab:1}--\ref{tab:2}, we show a sample of the Coxeter groups we analyzed, namely those from the classification of 
Roberts \cite{Roberts} having trace field of degree $>1$. 
Each of these Coxeter groups is pseudo-arithmetic over $K/\Q$ (where $K$ is the trace field).
The indicated quadratic forms were found by brute-force search over quadratic forms of the
shape $f_0 + ax_n^2$, where $f_0 = -x_0^2 + x_1^2 + \cdots + x_{n-1}^2$.

The two examples of Coxeter groups with trace field of degree $8$ over $\Q$ are shown in Figure \ref{fig:deg8}.
Moreover, the 5 examples of Coxeter groups in $\Hy^5$ with trace field of degree 4 over $\Q$
are shown in Figure \ref{fig:deg4}.

\newcolumntype{L}[1]{>{\raggedright\let\newline\\\arraybackslash\hspace{0pt}}p{#1}}
\newcolumntype{M}[1]{>{\raggedright\let\newline\\\arraybackslash\hspace{0pt}}m{#1}}
\begin{table}[h!] 
\begin{tabular}{|c|M{0.21\textwidth}|c|l|M{0.21\textwidth}|}
  \hline
    deg & Reference \newline \cite[page(item)]{Roberts}   & dim & Trace field & Quadratic form \newline $f = f_0 + a x_n^2$\\
  \hline
  \multirow{2}{*}{8} & 9(e) &\multirow{2}{*}{4} & \multirow{2}{*}{$\Q(\sqrt{2}, \sqrt{3}, \sqrt{5})$} 
                                      & \multirow{2}{*}{$a=1$} \\
        & 11(e) &  &&  \\
  
  \cline{1-5}
  \multirow{15}{*}{4}& 9(a) & \multirow{8}{*}{4} & $\Q(\sqrt{2}, \sqrt{15})$ & $a=3$ \\ 
  \cline{4-4}
& 9(c) &  &  \multirow{3}{*}{$\Q(\sqrt{3}, \sqrt{5})$ } & $a=2$ \\
& 9(d) &  &  & $a=1$ \\
& 9(f) &  &  & $a=2$ \\  
  \cline{4-4}\cline{5-5}
& 11(a)  & & $\Q(\sqrt{2}, \sqrt{3})$  & \multirow{4}{*}{$a=1$} \\
  \cline{4-4}
& 11(c)  & &  \multirow{3}{*}{$\Q(\sqrt{5}, \sqrt{6})$} & \\
& 11(d) &  &  & \\
        & 11(f) &  &  &  \\  
  \cline{2-5}
  & 14(c) & \multirow{5}{*}{5} & \multirow{2}{*}{$\Q(\sqrt{2}, \sqrt{3})$} 
                                      &  \multirow{5}{*}{$a = 1$} \\
  & 17(d) & &&  \\
  \cline{4-4} 
  & 14(d)  && \multirow{2}{*}{$\Q(\sqrt{2}, \sqrt{5})$}&  \\
  & 17(c) &  && \\
  \cline{4-4} 
  & 19(e)  && $\Q(\sqrt{2}, \sqrt{13})$& \\
  \cline{2-5}

        & 23(e) & \multirow{2}{*}{6} & \multirow{2}{*}{$\Q(\sqrt{2}, \sqrt{3})$} 
                                      & \multirow{2}{*}{$a = 1$}\\
  & 24(h) &  &  & \\
  \hline
\end{tabular}
\caption{Coxeter groups in \cite{Roberts} with trace field of degree 8 and 4.}\label{tab:1}
\end{table}

\begin{table}[h!] 
\begin{tabular}{|c|L{0.35\textwidth}|c|l|M{0.21\textwidth}|}
\hline
  deg & Reference \cite[page(item)]{Roberts}   & dim & Trace field & Quadratic form \newline $f = f_0 + a x_n^2$\\
\hline
  \multirow{27}{*}{2} &  08(d), 08(e) & \multirow{6}{*}{4}& $\Q(\sqrt{15}) $ & $a = 6$\\
&  08(b), 12(b) & & $\Q(\sqrt{3}) $ & $a = 1$\\
&  10(d), 11(b) & & $\Q(\sqrt{3}) $ & $a = 2$\\
&  08(f) & & $\Q(\sqrt{30}) $ & $a = 3$\\
&  09(b) & & $\Q(\sqrt{30}) $ & $a = 6$\\
&  10(b), 10(c) & & $\Q(\sqrt{6}) $ & $a = 1$\\
\cline{2-5}
&  13(d), 13(f), 16(d), 16(f) & \multirow{10}{*}{5} & $\Q(\sqrt{10}) $ & $a = 2$\\
&  19(d), 19(f) & & $\Q(\sqrt{13}) $ & $a = 1$\\
&  18(b), 18(c), 18(e), 20(b), 20(c), 20(e), 21(b), 21(d), 22(a)& & $\Q(\sqrt{2}) $ & $a = 1$\\
&  19(b), 19(c) & & $\Q(\sqrt{26}) $ & $a = 2$\\
&  14(a), 14(e), 17(b), 17(f) & & $\Q(\sqrt{3}) $ & $a = 1$\\
&  14(b), 14(f), 15(c), 15(e), 17(a), 17(e) & & $\Q(\sqrt{5}) $ & $a = 1$\\
&  13(c), 13(e), 16(c), 16(e) & & $\Q(\sqrt{6}) $ & $a = 2$\\
\cline{2-5}
&  26(d), 26(e) & \multirow{6}{*}{6}& $\Q(\sqrt{10}) $ & $a = 1$\\
&  24(b), 24(c), 25(c), 25(d), 25(f), 25(g) & & $\Q(\sqrt{2}) $ & $a = 1$\\
&  23(d), 23(f), 24(g), 25(a) & & $\Q(\sqrt{3}) $ & $a = 1$\\
&  26(a), 26(b) & & $\Q(\sqrt{35}) $ & $a = 10$\\
&  23(b), 23(c), 24(e), 24(f) & & $\Q(\sqrt{6}) $ & $a = 2$\\
\cline{2-5}
&  27(b), 27(c) & \multirow{4}{*}{7}& $\Q(\sqrt{22}) $ & $a = 2$\\
&  27(e), 27(f) & & $\Q(\sqrt{3}) $ & $a = 1$\\
&  28(e), 28(f) & & $\Q(\sqrt{5}) $ & $a = 3$\\
&  28(b), 28(c), 29(b), 29(c) & & $\Q(\sqrt{6}) $ & $a = 2$\\
\cline{2-5}
&  30(f), 31(a) & \multirow{2}{*}{8}& $\Q(\sqrt{3}) $ & $a = 2$\\
&  30(b), 30(c) & & $\Q(\sqrt{5}) $ & $a = 2$\\
\cline{2-5}
&  32(b), 32(c) & 9& $\Q(\sqrt{5}) $ & $a = 1$\\
&  33(b), 33(c) & 10& $\Q(\sqrt{2}) $ & $a = 1$\\
\hline
\end{tabular}
\caption{Coxeter groups in \cite{Roberts} with trace field of degree 2.}\label{tab:2}
\end{table}

\pgfmathsetmacro{\minsize}{0.2cm}
\renewcommand\thesubfigure{\arabic{subfigure}}
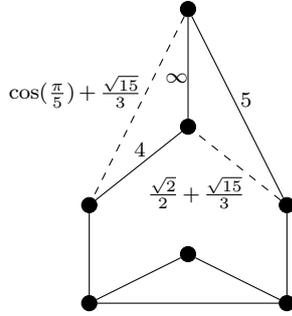
\begin{figure}[h]
\caption{Two examples of Coxeter polytopes in $\Hy^4$ with trace field of degree 8 over $\Q$.} \label{fig:deg8}
\begin{subfigure}[h]{.45\textwidth}
  \begin{centering}
\begin{tikzpicture}[scale=1.3]
\tikzstyle{every node}=[draw,shape=circle,minimum size=\minsize,inner sep=0, fill=black]
\node at (0,0) (61) {};
\node at (0,1) (4) {};
\node at (2,1) (0) {};
\node at (1,3) (3) {};
\node at (2,0) (63) {};
\node at (1,0.5) (62) {};
\node at (1,1.8) (1) {};
\draw (3)--(1)  node[midway,below left, opacity=0, text opacity=1]  {\scriptsize $\infty\,$};
\draw[dashed] (0)--(1) node[midway,below=10pt, left=-4pt, opacity=0, text opacity=1]%
{\scriptsize $\frac{\sqrt{2}}{2}+\frac{\sqrt{15}}{3}$};
\draw[dashed] (3)--(4) node[midway,above=6pt, left=-2pt, opacity=0, text opacity=1]%
{\scriptsize $\cos(\frac{\pi}{5})+\frac{\sqrt{15}}{3}$};
\draw (4)--(61) ;
\draw (0)--(63) ;
\draw (61)--(63);
\draw (61)--(62);
\draw (62)--(63);
%
%
\draw (1)--(4) node[midway,above=6pt, left=-4pt, opacity=0, text opacity=1]  {\scriptsize $4$};
\draw (0)--(3) node[midway,above right, opacity=0, text opacity=1]  {\scriptsize $\,5$};
%
\end{tikzpicture}
\caption{\cite[p. 9 (e)]{Roberts}}
  \end{centering}
\end{subfigure}
\begin{subfigure}[h]{.45\textwidth}\begin{centering}
\begin{tikzpicture}[scale=1.3]
\tikzstyle{every node}=[draw,shape=circle,minimum size=\minsize,inner sep=0, fill=black]
\node at (0,0) (61) {};
\node at (0,1) (4) {};
\node at (2,1) (0) {};
\node at (1,3) (3) {};
\node at (2,0) (63) {};
\node at (1,0.5) (62) {};
\node at (1,1.8) (1) {};
\draw (3)--(1)  node[midway,below left, opacity=0, text opacity=1]  {\scriptsize $\infty\,$};
\draw[dashed] (0)--(1) node[midway,below=7pt, left=-3pt, opacity=0, text opacity=1]%
{\scriptsize $\frac{\sqrt{2}}{2}+\frac{2\sqrt{6}}{3}$};
\draw[dashed] (3)--(4) node[midway,above=6pt, left=-2pt, opacity=0, text opacity=1]%
{\scriptsize $\cos(\frac{\pi}{5})+\frac{2\sqrt{6}}{3}$};
\draw (4)--(61) ;
\draw (0)--(63) ;
\draw (61)--(63);
\draw (61)--(62);
\draw (62)--(63);
%
%
\draw (1)--(4) node[midway,above=6pt, left=-4pt, opacity=0, text opacity=1]  {\scriptsize $4$};
\draw (0)--(3) node[midway,above right, opacity=0, text opacity=1]  {\scriptsize $\,5$};
\draw (0)--(4);
%
\end{tikzpicture}
\caption{\cite[p. 11 (e)]{Roberts}}
\end{centering}\end{subfigure}
\end{figure}

\begin{figure}
\caption{Five examples of Coxeter polytopes in $\Hy^5$ with trace field of degree 4 over $\Q$.} \label{fig:deg4}
\begin{subfigure}[h]{.45\textwidth}\begin{centering}
\begin{tikzpicture}[scale=1.3] 
\tikzstyle{every node}=[draw,shape=circle,minimum size=\minsize,inner sep=0, fill=black]
\node at (0,0) (sw) {};
\node  at (2,0) (se) {};
\node  at (0,1) (w) {};
\node  at (2,1) (e) {};
\node  at (1,2) (nw) {};
\node  at (1,-0.5) (s) {};
\node  at (1,0.5) (n) {};
\node  at (1,1.5) (ne) {};

\draw (ne)--(nw) node[midway,right=5pt, below=-2pt,opacity=0, text opacity=1] {\scriptsize$\infty$};

\draw (w)--(sw);
\draw[dashed] (ne)--(e) node[midway, below=9pt, left= -6pt, opacity=0, text opacity=1]
{\scriptsize$\,\frac{\sqrt{2}}{2}+\frac{\sqrt{6}}{2}$};
\draw[dashed] (nw)--(w) node[midway,above left, opacity=0, text opacity=1]  {\scriptsize$\,\frac{1}{2}+\frac{\sqrt{6}}{2}$};
\draw (se)--(e);
\draw (sw)--(n);
\draw (sw)--(s);
\draw (se)--(n);
\draw (se)--(s);
\draw (nw)--(e);

\draw (0,1)--(1,1.5) node[midway,below right, opacity=0, text opacity=1] {\scriptsize$4$};
%
%
\end{tikzpicture}
\caption{\cite[p. 14 (c)]{Roberts}}
\end{centering}\end{subfigure}
\begin{subfigure}[h]{.45\textwidth}\begin{centering}
\begin{tikzpicture}[scale=1.3]
\tikzstyle{every node}=[draw,shape=circle,minimum size=\minsize,inner sep=0, fill=black]
\node at (0,0) (sw) {};
\node  at (2,0) (se) {};
\node  at (0,1) (w) {};
\node  at (2,1) (e) {};
\node  at (1,2) (nw) {};
\node  at (1,-0.5) (s) {};
\node  at (1,0) (n) {};
\node  at (1,1.5) (ne) {};

\draw (ne)--(nw) node[midway,right=5pt, below=-2pt, opacity=0, text opacity=1] {\scriptsize$\infty$};

\draw (w)--(sw);
\draw[dashed] (ne)--(e) node[midway, below=9pt, left= -6pt, opacity=0, text opacity=1]
{\scriptsize$\,\frac{\sqrt{2}}{2}+\frac{\sqrt{10}}{2}$};
\draw[dashed] (nw)--(w) node[midway,above left, opacity=0, text opacity=1]  {\scriptsize$\,\frac{1}{2}+\frac{\sqrt{10}}{2}$};
\draw (se)--(e);
\draw (sw)--(n);
\draw (sw)--(s);
\draw (se)--(n);
\draw (se)--(s);
\draw (nw)--(e);

\draw (0,1)--(1,1.5) node[midway,below right, opacity=0, text opacity=1] {\scriptsize$4$};

\draw (w) .. controls (1,0.35) .. (e);

\end{tikzpicture}
\caption{\cite[p. 14 (d)]{Roberts}}
\end{centering}\end{subfigure}

\begin{subfigure}[h]{.45\textwidth}\begin{centering}
\begin{tikzpicture}[scale=1.3]
\tikzstyle{every node}=[draw,shape=circle,minimum size=\minsize,inner sep=0, fill=black]
\node  at (0,0) (sw) {};
\node  at (2,0) (se) {};
\node  at (0,1) (w) {};
\node  at (2,1) (e) {};
\node  at (1,2.5) (nw) {};
\node  at (1,-0.25) (s) {};
\node  at (1,0.25) (n) {};
\node  at (1,1.5) (ne) {};

\draw (ne)--(nw) node[midway,right, opacity=0, text opacity=1] {\scriptsize$\infty$};

\draw (w)--(sw);
\draw[dashed] (ne)--(e) node[midway,below=9pt, left= -6pt, opacity=0, text opacity=1]
{\scriptsize$\,\frac{\sqrt{10}}{2}+\frac{1}{2}$};
\draw[dashed] (nw)--(w) node[midway,above left, opacity=0, text opacity=1]
{\scriptsize$\,\frac{\sqrt{2}}{2}+\frac{\sqrt{10}}{2}$};
\draw (se)--(e);

\draw (se)--(s);
\draw (sw)--(s);

\draw (1,-0.25)--(1,0.25) node[midway,right, opacity=0, text opacity=1] {\scriptsize$4$};

\draw (w)--(ne);
\draw (w) .. controls (1,0.35) .. (e);

\draw (1,2.5)--(2,1) node[midway,above right, opacity=0, text opacity=1] {\scriptsize$4$};

\end{tikzpicture}
\caption{\cite[p. 17 (c)]{Roberts}}
\end{centering}\end{subfigure}
\begin{subfigure}[h!]{.45\textwidth}\begin{centering}
\begin{tikzpicture}[scale=1.3]
\tikzstyle{every node}=[draw,shape=circle,minimum size=\minsize,inner sep=0, fill=black]
\node at (0,0) (sw) {};
\node  at (2,0) (se) {};
\node at (0,1) (w) {};
\node at (2,1) (e) {};
\node at (1,2.5) (nw) {};
\node at (1,0) (s) {};
\node at (1,0.5) (n) {};
\node at (1,1.5) (ne) {};

\draw (ne)--(nw) node[midway,right, opacity=0, text opacity=1] {\scriptsize$\infty$};

\draw (w)--(sw);
\draw[dashed] (ne)--(e) node[midway, below=9pt, left= -6pt, opacity=0, text opacity=1]
{\scriptsize$\,\frac{\sqrt{6}}{2}+\frac{1}{2}$};
\draw[dashed] (nw)--(w) node[midway,above left, opacity=0, text opacity=1]  {\scriptsize$\,\frac{\sqrt{2}}{2}+\frac{\sqrt{6}}{2}$};
\draw (se)--(e);

\draw (se)--(sw);

\draw (1,0)--(1,0.5) node[midway,right, opacity=0, text opacity=1] {\scriptsize$4$};

\draw (w)--(ne);



\draw (1,2.5)--(2,1) node[midway,above right, opacity=0, text opacity=1] {\scriptsize$4$};

\end{tikzpicture}
\caption{\cite[p. 17 (d)]{Roberts}}
\end{centering}\end{subfigure}


\begin{subfigure}[h!]{.45\textwidth}\begin{centering}
\begin{tikzpicture}[scale=1.3]
\tikzstyle{every node}=[draw,shape=circle,minimum size=\minsize,inner sep=0, fill=black]
\node at (0,0.5) (61) {};
\node at (2,0.5) (64) {};
\node at (0,1) (4) {};
\node at (2,1) (0) {};
\node at (1,2.5) (3) {};
\node at (0,0) (62) {};
\node at (2,0) (63) {};
\node at (1,1.5) (1) {};

\draw (1)--(3) node[midway,right, opacity=0, text opacity=1] {\scriptsize$\infty$};
\draw (4)--(61);
\draw (0)--(64);
\draw[dashed] (3)--(4) node[midway,above left, opacity=0, text opacity=1]  {\scriptsize$\,\frac{\sqrt{2}}{2}+\frac{\sqrt{26}}{4}$};
\draw[dashed] (1)--(0) node[midway,below=9pt, left= -6pt, opacity=0, text opacity=1]
{\scriptsize$\,\frac{1}{2}+\frac{\sqrt{26}}{4}$};

\draw (64)--(61);
\draw (64)--(63);

\draw (1)--(4);

\draw (1,2.5)--(2,1) node[midway,above right, opacity=0, text opacity=1] {\scriptsize$4$};

\draw (62)--(61);
\draw (62)--(63);

\end{tikzpicture}
\caption{\cite[p. 19 (e)]{Roberts}}
\end{centering}\end{subfigure}
\end{figure}
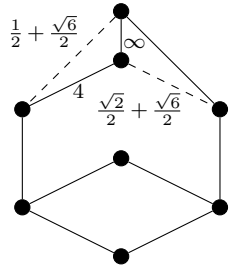


\clearpage
\bibliographystyle{amsplain}
\bibliography{all}

\end{document}